\theoremstyle{definition}
\newtheorem{definition}{Definition}
\newtheorem{conjecture}[definition]{Conjecture}
\newtheorem{theorem}[definition]{Theorem}
\newtheorem{lemma}[definition]{Lemma}
\newtheorem{proposition}[definition]{Proposition}
\newtheorem{corollary}[definition]{Corollary}
\newcommand{\dist}{\operatorname{dist}}
\title{Cages and cyclic connectivity}
\author{Robert Lukoťka, Edita Máčajová, Jozef Rajník\\[2mm]
\normalsize Department of Computer Science\\
\normalsize Comenius University, 842 48 Mlynská Dolina, Bratislava, Slovakia\\
\normalsize\tt \{luktoka, macajova, rajnik\}@dcs.fmph.uniba.sk
}
\begin{document}

\maketitle

\begin{abstract}
A graph $G$ is \emph{cyclically $c$-edge-connected} if there is no set of fewer than $c$ edges that disconnects $G$ into at least two cyclic components. We prove that if a $(k, g)$-cage $G$ has at most $2M(k, g) - g^2$ vertices, where $M(k, g)$ is the Moore bound, then $G$ is cyclically $(k - 2)g$-edge-connected, 
which equals the number of edges separating a $g$-cycle,
and every cycle-separating $(k - 2)g$-edge-cut in $G$ separates a cycle of length $g$. In particular, this is true for unknown cages with $(k, g) \in \{(3, 13), (3, 14), (3, 15), (4, 9), (4, 10)$, $(4, 11),$ $(5, 7), (5, 9), (5, 10), (5, 11), (6, 7), (9, 7)\}$ and also the potential missing Moore graph with degree $57$ and diameter $2$.

\medskip
\noindent\textbf{Keywords:} cage, cyclic connectivity, girth, lower bound
\end{abstract}

\section{Introduction}

A \emph{$(k, g)$-graph} is a $k$-regular graph with girth $g$. The well-known cage problem, also known as the degree-girth problem, asks to find a~smallest $(k, g)$-graph, called a \emph{$(k, g)$-cage}, whose order is denoted by $n(k, g)$. The cage problem was first introduced by Tutte \cite{Tutte47} in 1947 and since then it has been widely studied \cite{Exoo}.

In general, finding the order of cages is difficult. A universal lower bound on the size of cages, also called the \emph{Moore bound}, says that $n(k, g) \ge M(k, g)$, where
$$M(k, 2d + 1) = 1 + k + k(k - 1) + \dots + k(k - 1)^{d - 1} = \frac{k(k-1)^d - 2}{k - 2}$$
and
$$M(k, 2d + 2) = 2 + 2(k - 1) + 2(k - 1)^2 + \dots + 2(k- 1)^{d} = \frac{2(k - 1)^{d + 1} - 2}{k - 2}.$$
This comes from the observation that the set of all edges of a $(k, g)$-graph at distance at most $d = \lfloor(g - 3)/2\rfloor$ from a fixed vertex for odd $g$, or from a fixed edge for even $g$, induces a tree where all non-leaf vertices have degree $k$ and all leaves are in depth $d$.

If a $(k, g)$-cage attains this bound, it is called a \emph{Moore graph}, or precisely \emph{$(k, g)$-Moore graph}. For odd $g$, Moore graphs can be equivalently characterised as $k$-regular graphs with diameter $(g - 1)/2$ and order $M(k, g)$, which is the largest possible order under these requirements. Moore graphs are almost completely characterised. The most famous open case is a~hypothetical $(57, 5)$-Moore graph (having diameter $2$ and order $3250$), whose existence has not been settled.

Except for these cases, only a small finite number of cages is known.
Most of them have been found by various exhaustive searches, usually with the assistance of a~computer.
So far, the best general upper bound on $n(k, g)$ is roughly $M(k, g)^2$ \cite{Sauer}. Also, there are no significant improvements to the Moore bound.
For instance, Jajcayová, Filipovski and Jajcay \cite{Jajcayova16} showed that $n(k,g) > M(k, g) + 4$ for some infinite class of parameters $k$ and $g$.
Therefore, the gap between the lower and upper bound on $n(k, g)$ is rather large. For a detailed survey on the cage problem, we refer the reader to the dynamic survey \cite{Exoo}. 

Since finding the exact values of $n(k, g)$ is difficult,
various properties of cages are studied, connectivity properties among them. Fu et al. \cite{Fu} conjectured that all simple $(k, g)$-cages are $k$-connected. Balbuena and Salas \cite{Balbuena12} showed that each $(k, g)$-cage is $(\lfloor k/2 \rfloor + 1)$-connected for every odd $g \ge 7$ and Lin et al. \cite{Lin08} showed that they are $(r + 1)$-connected for each even $g$ and $r^3 + 2r^2 \le k$. In contrast to the vertex-connectivity, the edge-connectivity of cages is known to be $k$, proven by Wang et al. for odd girth \cite{Wang03} and by Lin et al. for even girth \cite{Lin05}.

However, (edge-)connectivity itself does not faithfully describe the structural properties of $k$-regular graphs, especially for small $k$, since it is bounded by $k$ from above.
Therefore, various modifications of (edge-)connectivity are considered throughout the literature that forbid ``trivial'' cuts, usually by requiring some additional properties of the components obtained by the removal of a cut.
Marcote et al. \cite{Marcote04} together with Lin et al. \cite{Lin06} showed that all $(k, g)$-cages are \emph{edge-superconnected}, that is, they are $k$-edge-connected and each $k$-cut consists of $k$ edges incident with the same vertex. In other words, there are forbidden components containing only one vertex.
For a survey on various properties required from the components, we refer the reader to \cite{Harary83}.

According to Tutte \cite{Tutte60}, we consider only edge-cuts whose removal yields at least two components that contain a cycle. Such edge-cuts are called \emph{cycle-separating}. A graph $G$ is \emph{cyclically $k$-edge-connected} if it contains no cycle-separating edge-cut consisting of fewer than $k$ edges.
Note that a graph with minimum degree $k$ is edge-superconnected if and only if it is cyclically $(k + 1)$-connected.
Cyclic edge-connectivity has emerged as an important invariant of cubic graphs. It appears in approaches to numerous widely-open problems \cite{Glover, Kochol04_5f_cc6, Macajova20_bf_cc5, Robertson17, Zhang}. Several results \cite{Doslic03, Lovasz86, McCuaig92} and conjectures \cite{Jaeger} suggest that high cyclic edge-connectivity ensures various structural properties of cubic graphs. 
Despite its importance, cyclic edge-connectivity has not been studied in the area of cages.
Therefore, we shall investigate it in this paper.

First, we illustrate a natural lower bound. Consider a $(k, g)$-cage $G$ for $k \ge 3$ and $g \ge 5$, and let $C_g$ be a $g$-cycle of $G$. We can separate $C_g$ by the removal of $(k - 2)g$ edges. For $k \ge 3$ and $g \ge 5$, the subgraph $G - C_g$ has at least $n(k, g) - g \ge g$ vertices, so it contains a cycle. Hence, the $(k, g)$-cage $G$ is at most cyclically  $(k - 2)g$-edge-connected.
As it happens for the usual edge-connectivity, it is reasonable to conjecture that the cyclic edge-connectivity of cages also attains this maximal possible value. Thus, we propose the following conjecture.

\begin{conjecture}
    \label{conj:cc}
    Each $(k, g)$-cage is cyclically $(k - 2)g$-edge-connected.
\end{conjecture}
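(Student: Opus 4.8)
The plan is to argue by contradiction. Suppose some $(k,g)$-cage $G$ admits a cycle-separating edge-cut $S$ with $|S| = c \le (k-2)g - 1$, chosen minimum among all such cuts, and let $(A, B)$ be the corresponding vertex bipartition, so that both $G[A]$ and $G[B]$ contain a cycle. Since $G$ has girth $g$, each side contains a cycle of length at least $g$ and hence at least $g$ vertices. Write $a = |A|$, $b = |B|$ and assume $a \le b$. As $G$ is $k$-regular, the number of edges inside $A$ equals $(ka - c)/2$, and the $c$ edges of $S$ meet $A$ in a set $A_0$ of at most $c$ attachment vertices, with the analogous notation on the $B$ side. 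The overall strategy is to contradict the minimality of the cage itself: I would delete the larger side $B$, leaving $c$ dangling half-edges at $A_0$, and reconnect them with a small $k$-regular gadget of girth at least $g$, producing a $(k,g)$-graph on fewer than $n(k,g)$ vertices.

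Two ingredients drive this. First, a Moore-type expansion estimate: a side carrying a $g$-cycle but only $c < (k-2)g$ outgoing edges must contain a large ball of vertices, because the $k$-regular girth-$g$ structure forces the neighbourhoods of the cycle to expand tree-like up to the Moore radius $d = \lfloor (g-3)/2\rfloor$ before any folding or any crossing of $S$ can occur. This should give a lower bound on $\min(a,b)$ in terms of $c$ and $M(k,g)$, and in particular force $b$ to be comparatively large. Second, the surgery itself: once $B$ is removed, the $c < (k-2)g$ half-edges can in principle be absorbed into a gadget built from a truncated Moore tree with $c$ leaves, whose internal structure is $k$-regular and of girth at least $g$; such a gadget needs only $O(c)$ vertices, which is smaller than $b$ once the expansion estimate has pushed $b$ past that threshold.

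The main obstacle, and the reason the statement is only conjectured, is the girth condition on the reconnection. Two attachment vertices $u, v \in A_0$ at distance $t = \dist_{G[A]}(u, v)$ forbid any $u$--$v$ path of length at most $g - t - 1$ through the gadget, so when the attachments cluster tightly around the $g$-cycle --- the worst case, which a large cage can exhibit --- the gadget must route all connecting paths to length at least $g$ while staying small, and these two demands conflict. In the conditional theorem the hypothesis $n(k,g) \le 2M(k,g) - g^2$ is precisely what keeps the cage small enough to force the attachment set to be dispersed, so the gadget exists. To remove the hypothesis I would try to prove a dispersion lemma directly: in any cage, the endpoints of a minimum cycle-separating cut cannot be clustered, since a clustered cut could be locally re-routed --- exchanging a short bundle of cut edges for a cheaper one across a nearby tree-like neighbourhood --- contradicting the minimality of $S$ together with the edge-superconnectivity of cages. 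Establishing this dispersion lemma unconditionally is the crux, and it is where I expect the real difficulty to lie; its absence is what currently separates the full conjecture from the proved conditional version.
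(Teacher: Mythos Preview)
The statement you are attempting is a \emph{conjecture} in the paper; the paper does not prove it and offers no proof to compare against. What the paper does prove is the conditional version (its Theorems~8 and~9): if $|G| < 2b_2(k,g,(k-2)g)$, in particular if $|G| \le 2M(k,g) - g^2$, then the conclusion holds. Your proposal is likewise not a proof: you explicitly leave the ``dispersion lemma'' unproved and acknowledge that the gadget construction is the crux. So at best this is a programme, not an argument.

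More importantly, your description of the paper's conditional argument is wrong, and this matters because it suggests you are aiming at the wrong target. The paper performs \emph{no surgery} and builds \emph{no gadget}. Its route is purely a counting lower bound: each side of a cycle-separating $s$-edge-cut is a $k$-regular multipole with girth $\ge g$ and $s$ semiedges; by an averaged Moore-tree estimate (Propositions~3--5) the order $n$ of such a multipole satisfies a quadratic inequality $P_{k,g,s}(n)\ge 0$, and Theorem~8 shows that a \emph{nontrivial} multipole cannot land below the smaller root, hence $n \ge b_2(k,g,s)$. Both sides being nontrivial forces $|G| \ge 2b_2$, which is the whole conditional statement. The hypothesis $|G| < 2b_2$ is not ``what keeps the attachment set dispersed so a gadget exists''; no attachment geometry enters at all.

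Your surgery idea --- delete the larger side and cap off the $c$ half-edges with a small girth-$g$ gadget --- is a different strategy, and its gap is exactly where you locate it: you need a $k$-regular plug on $c < (k-2)g$ terminals that creates no cycle shorter than $g$ through $A$, yet uses fewer vertices than $B$. You assert such a plug needs only $O(c)$ vertices, but that is not established; a $k$-regular object of girth $g$ generically needs order on the Moore scale, and the terminals can sit pairwise at distance as small as $2$ in $A$ (around a $g$-cycle), forcing long internal routes. Your proposed fix, a re-routing argument showing the cut endpoints cannot cluster, is plausible-sounding but speculative: minimality of the cut and edge-superconnectivity control the \emph{number} of cut edges, not the pairwise distances of their endpoints, and there is no evident local exchange that turns a clustered minimum cycle-separating cut into a smaller one. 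Until that lemma is actually proved, the proposal remains a heuristic outline rather than a proof.
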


Moreover, we introduce the following strengthening saying that each cycle-separating of the conjectured maximal size separates a $g$-cycle.

\begin{conjecture}
    \label{conj:g-cut}
    For each $(k, g)$-cage $G$, any cycle-separating $(k - 2)g$-edge-cut in $G$ separates a~$g$-cycle.
\end{conjecture}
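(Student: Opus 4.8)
The plan is to prove the statement unconditionally by a minimality (surgery) argument, relying only on the fact that $G$ is a smallest $(k,g)$-graph and deliberately avoiding any a priori bound on $|V(G)|$. Throughout, ``the cut separates a $g$-cycle'' is read as: one of the two parts induces a cycle of length exactly $g$; note this is the extremal configuration, since a side equal to a $g$-cycle contributes exactly $(k-2)g$ edges to the cut. So suppose for contradiction that $G$ has a cycle-separating edge-cut $S$ with $|S| = (k-2)g$ whose removal yields parts $A$ and $B$, each inducing a subgraph that contains a cycle, and that \emph{neither} side is a $g$-cycle. I would first pin the sides down by a degree count. Writing $a = |A|$, every vertex of $A$ has degree $k$ in $G$, so $ka = 2e_{\mathrm{int}} + (k-2)g$ and hence $A$ has $e_{\mathrm{int}} = (ka - (k-2)g)/2$ internal edges. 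Since $G[A]$ contains a cycle and $G$ has girth $g$, we get $a \ge g$; and if $a = g$ then $e_{\mathrm{int}} = g$, so $G[A]$ is a graph on $g$ vertices with $g$ edges containing a cycle of length $\ge g$, forcing $G[A]$ to be a single $g$-cycle. Thus the assumption that $A$ is not a $g$-cycle yields $a \ge g + 1$, and symmetrically $|B| \ge g + 1$.

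Next I would perform \emph{replacement surgery}. The $(k-2)g$ edges of $S$ present exactly $(k-2)g$ half-edges to the $B$-side, while a fresh $g$-cycle $Z$ offers exactly $(k-2)g$ free half-edges, since each of its $g$ vertices has degree $2$ in $Z$ and must gain $k-2$ further incidences to reach degree $k$. I would delete $A$ entirely, adjoin $Z$, and join the $(k-2)g$ dangling half-edges of $B$ to the free half-edges of $Z$ by a suitable bijection, producing a graph $G'$ on $|B| + g$ vertices. Because $a \ge g + 1$, we have $|V(G')| = |B| + g < |A| + |B| = |V(G)|$. The half-edge count makes $G'$ automatically $k$-regular once the bijection avoids parallel edges, which is easy to arrange for $k \ge 3$; and $G'$ contains the $g$-cycle $Z$, so its girth is at most $g$. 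If I can guarantee that $G'$ has girth exactly $g$, then $G'$ is a $(k,g)$-graph strictly smaller than $G$, contradicting that $G$ is a cage, and this contradiction forces one side of $S$ to be a $g$-cycle, which is the claim.

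The main obstacle, and where I expect the real work to lie, is exactly ensuring that the surgery creates no cycle shorter than $g$. Cycles confined to $B$ already have length $\ge g$ because $G[B]$ is a subgraph of $G$, and $Z$ itself has length $g$; the danger is cycles that leave $Z$, cross into $B$ along two reattached edges, and return. Such a cycle is short precisely when the two $B$-endpoints of its reattached edges are joined by a short path in $G[B]$ while occupying nearby positions on $Z$. The natural remedy is to choose the bijection so that half-edges whose $B$-endpoints are mutually \emph{close} in $G[B]$ are routed to positions \emph{far apart} on $Z$, and vice versa; making this simultaneously consistent for all $(k-2)g$ edges is a matching/dispersion problem on the attachment points, and controlling the pairwise distances in $G[B]$ among these $(k-2)g$ endpoints is where no unconditional argument is presently available. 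This is exactly the leverage that the order hypothesis $|V(G)| \le 2M(k,g) - g^2$ supplies in the companion theorem, since bounding the total number of vertices limits how each side can wrap a full Moore neighborhood around a $g$-cycle and thereby constrains those distances. I would therefore regard the conjecture as reducing to a purely local realisability question, namely whether the $(k-2)g$ pendant half-edges of $B$ can always be matched to a fresh $g$-cycle without short-circuiting the girth, and I expect that settling this dispersion step with no global size control is the crux that keeps the statement open.
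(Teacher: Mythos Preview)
The statement you are attempting is Conjecture~\ref{conj:g-cut}, which the paper explicitly leaves \emph{open}; there is no proof in the paper to compare against. The paper only establishes the conclusion under the extra hypothesis $|V(G)| < 2b_2(k,g,(k-2)g)$ (Theorem~\ref{thm:kg-graphs-cc}), and it does so by a completely different mechanism: it proves a lower bound on the order of any nontrivial $(k,g,s)$-multipole (Theorem~\ref{thm:kgs-pole-lower}), so that if \emph{both} sides of a cycle-separating $(k-2)g$-cut were nontrivial, the graph would already have at least $2b_2(k,g,(k-2)g)$ vertices. No surgery is performed.

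Your replacement-surgery outline is natural, but as you yourself concede in the final paragraph, it has a genuine gap at the decisive step: there is no argument that the $(k-2)g$ pendant half-edges of $B$ can be matched to a fresh $g$-cycle $Z$ without creating a cycle of length $<g$. This is not a technicality. A cycle entering $Z$ at $z_1$, running along an arc of length $\ell$, and exiting at $z_2$ has length $\ell + \dist_{G[B]}(b_1,b_2) + 2$, and you need \emph{every} such sum to be at least $g$; this is a dense system of pairwise constraints on the bijection with no evident slack, and nothing in the minimality of $G$ gives you control over the pairwise distances among the $B$-endpoints. Even the preliminary remark that avoiding parallel edges ``is easy to arrange for $k\ge 3$'' is unjustified: a single vertex $b\in B$ may carry up to $k$ cut edges, and when $k>g$ (e.g.\ the hypothetical $(57,5)$-Moore graph) those cannot all be routed to distinct vertices of a $g$-cycle. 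So the proposal is not a proof but a reduction to precisely the obstruction that keeps the conjecture open, and your own closing sentences acknowledge this accurately.
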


In this paper, we show that Conjectures \ref{conj:cc} and \ref{conj:g-cut} hold for all $(k, g)$-cages with order at most $2M(k, g) - g^2$. This covers all Moore graphs, including the hypothetical $(57, 5)$-Moore graph. 
Thus, if the hypothetical Moore graph with degree $57$ and girth $5$ exists, its cyclic edge-connectivity attains the maximal possible value of $55\cdot 5 = 275$ and each cycle-separating $275$-edge-cut separates a $5$-cycle.
Using the known upper bounds on $n(k, g)$, we also conclude that
Conjectures \ref{conj:cc} and \ref{conj:g-cut} are true for some small values of $k$ and $g$, in particular, for unknown cages with $(k, g) \in \{(3, 13), (3, 14), (3, 15), (4, 9), (4, 10)$, $(4, 11),$ $(5, 7), (5, 9), (5, 10), (5, 11), (6, 7),\linebreak (9, 7)\}$

The idea of our approach is based on the observation that a component obtained after the removal of an edge-cut from some $k$-regular graph can be viewed as a $k$-regular ``graph'' with semiedges allowed, called a multipole.
Therefore, we extend the cage problem to multipoles with $s$ semiedges, introducing the number $n(k, g, s)$ denoting the order of a smallest ``nontrivial'' $(k, g, s)$-multipole.
Besides its connection to cages, the problem of estimating the value $n(k, g, s)$ is also interesting on its own, for instance, in the study of subgraphs of graphs with high girth that can be separated by small edge-cuts.
Also, a similar problem has been studied by Nedela and Škoviera \cite{Nedela95}, who estimated the size of atoms of cyclic edge-connectivity.

In Proposition \ref{prop:moore-bound-multipoles}, we provide a Moore-like lower bound on the order $n$ of a~$(k, g, s)$-multipole.
Based on this, we show in Propositions \ref{prop:quadratic-odd} and \ref{prop:quadratic-even} that $n$  satisfies some quadratic inequality and in Proposition \ref{prop:simple-bounds}, we estimate that except for several small values of $k$ and $g$ we have either $n \le g^2/2$ or $n \ge M(k, g) - g^2/2$.
Our main result is captured in Theorem \ref{thm:kgs-pole-lower}, where we prove that the first of the two inequalities holds only in cases where the multipole is acyclic or is a single cycle of length $g$.
Therefore, if a $(k, g)$-graph $G$ has less than $2M(k, g) - g^2$ vertices, then it is cyclically $(k - 2)g$-connected and each cycle-separating $(k - 2)g$-edge cut in $G$ separates a $g$-cycle. So both Conjectures \ref{conj:cc} and \ref{conj:g-cut} hold.

\section{Multipoles with prescribed girth}
\label{sec:multipoles}

As we mentioned in Introduction, we generalise the cage problem to multipoles -- graphs with \emph{semiedges}, which are edges incident with only one vertex. We start by explaining some terminology.
An edge whose ends are incident with two distinct vertices is called a \emph{link}.
The \emph{order} $|G|$ of the multipole $G$ is the number of its vertices. The \emph{distance} $\dist_G(u, v)$ between the vertices $u$ and $v$ in a multipole $G$ is the number of edges in a shortest $u$-$v$-path. If there is no $u$-$v$-path, we set $\dist_G(u, v) = \infty$. For $X, Y \subseteq V(G)$, we define $\dist_G(X, Y) = \min\{\dist_G(x, y) \mid x \in X, y \in Y\}$. Here, $X$ or $Y$ can be an edge of $G$ which is treated as a set of vertices incident with it. The \emph{girth} of a multipole $G$ is the length of its shortest cycle or $\infty$ if $G$ is acyclic.
The \emph{degree} of a vertex $v \in V(G)$ is the number of edges (including semiedges) incident with $v$. If we want to exclude the semiedges, we use the term \emph{inner degree} of $v$.
A multipole is \emph{$k$-regular} if each its vertex has degree $k$. Since a graph is a special case of a multipole, all of these definitions can also be applied to graphs.

A \emph{$(k, g, s)$-multipole} is a $k$-regular multipole with girth \emph{at least} $g$ and exactly $s$ semiedges.
Observe that the notion of a $(k, g, s)$-multipole includes various trivial instances. Each acyclic $k$-regular multipole with $s$ semiedges is a $(k, g, s)$-multipole for each $g$ and the cycle of length $\ell \ge g$ with each vertex incident with $k - 2$ semiedges is a $(k, g, \ell(k - 2))$-multipole for any $k \ge 3$.
However, if a~$(k, g)$-cage $G$ would contain a cycle-separating edge-cut $S$ violating Conjecture \ref{conj:cc} or \ref{conj:g-cut}, then the components of $G - S$ are definitely different from the trivial instances mentioned above.
Therefore, for the study of Conjectures 1 and 2, we exclude these multipoles by the following definition.

A $(k, g, s)$-multipole with $s \le (k - 2)g$ is called \emph{nontrivial} if it is cyclic and different from a $g$-cycle. Otherwise, it is called \emph{trivial}. We denote the order of a smallest nontrivial $(k, g, s)$-multipole by $n(k, g, s)$.
Note that if $s < (k - 2)g$, the condition of being cyclic is sufficient to ensure nontriviality, since a $(k, g, s)$-multipole cannot be a $g$-cycle in this case.
Motivated by Conjectures \ref{conj:cc} and \ref{conj:g-cut}, we focus on the cases when $s \le (k - 2)g$ and we do not extend the notion of non-triviality to
values $s > (k - 2)g$. 

For a better touch on this newly introduced number, we present the values of $n(3, g, s)$ and $n(4, g, s)$ for small $g$ and $s$ in Table \ref{tab:g3-values} and Table \ref{tab:g4-values}, respectively. Some of these multipoles are depicted in Tables \ref{tab:g3-figures} and \ref{tab:g4-figures}. We have obtained these values using a computer.

\begin{table}[h]
    \centering
\begin{tabular}{|c|c|c|c|c|c|c|c|c|c|}
\hline
    $s =$   & 0 & 1 & 2 & 3 & 4 & 5 & 6 & 7 & 8 \\\hline
    $g = 3$ & 4 & 5 & 4 & 5 &   &   &   &   &   \\\hline
    $g = 4$ & 6 & 7 & 6 & 5 & 6 &   &   &   &   \\\hline
    $g = 5$ &10 &11 &10 & 9 & 8 & 7 &   &   &   \\\hline
    $g = 6$ &14 &15 &14 &13 &12 &11 & 8 &   &   \\\hline
    $g = 7$ &24 &25 &24 &23 &22 &21 &20 &17 &   \\\hline
    $g = 8$ &30 &31 &30 &29 &28 &27 &26 &25 &22 \\\hline
\end{tabular}
\caption{The values of $n(3, g, s)$ for small $g$ and $s$}
    \label{tab:g3-values}
\end{table}

\begin{table}[h]
    \centering
\begin{tabular}{|c|c|c|c|c|c|c|c|c|c|}
\hline
    $s =$   & 0 & 2 & 4 & 6 & 8 &10 &12 &14 &16 \\\hline
    $g = 3$ & 5 & 5 & 4 & 4 &   &   &   &   &   \\\hline
    $g = 4$ & 8 & 8 & 7 & 6 & 5 &   &   &   &   \\\hline
    $g = 5$ &19 &19 &18 &17 &15 &10 &   &   &   \\\hline
    $g = 6$ &26 &26 &25 &24 &23 &22 &20 &   &   \\
\hline
\end{tabular}
\caption{The values of $n(4, g, s)$ for small $g$ and $s$}
    \label{tab:g4-values}
\end{table}

\begin{table}[h]
\newcommand{\scale}{0.7}
\begin{tabular}{ccccccccc}
$g = $  &
$s = 0$ &
$s = 1$ &
$s = 2$ &
$s = 3$ &
$s = 4$ &
$s = 5$ &
$s = 6$ &
\\
$3$ &
\includegraphics[scale=\scale]{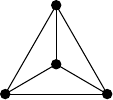} &
\includegraphics[scale=\scale]{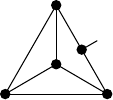} &
\includegraphics[scale=\scale]{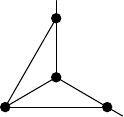} &
\includegraphics[scale=\scale]{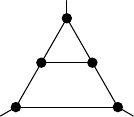} &&
\\
$4$ &
\includegraphics[scale=\scale]{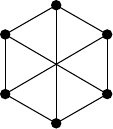} &
\includegraphics[scale=\scale]{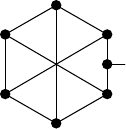} &
\includegraphics[scale=\scale]{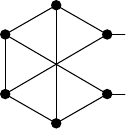} &
\includegraphics[scale=\scale]{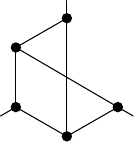} &
\includegraphics[scale=\scale]{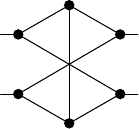} &
\\
$5$ &
\includegraphics[scale=\scale]{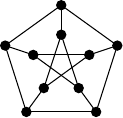} &
\includegraphics[scale=\scale]{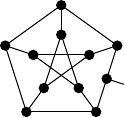} &
\includegraphics[scale=\scale]{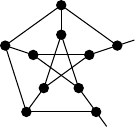} &
\includegraphics[scale=\scale]{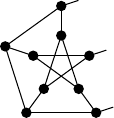} &
\includegraphics[scale=\scale]{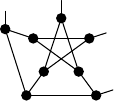} &
\includegraphics[scale=\scale]{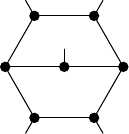} &
\\
$6$ &
\includegraphics[scale=\scale]{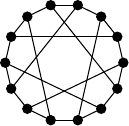} &
\includegraphics[scale=\scale]{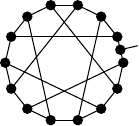} &
\includegraphics[scale=\scale]{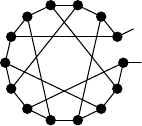} &
\includegraphics[scale=\scale]{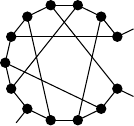} &
\includegraphics[scale=\scale]{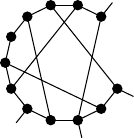} &
\includegraphics[scale=\scale]{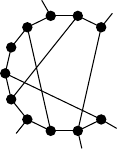} &
\includegraphics[scale=\scale]{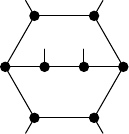} 
\\
\end{tabular}
\caption{Smallest nontrivial $(3, g, s)$-multipoles for small values of $g$ and $s$}
\label{tab:g3-figures}
\end{table}

\begin{table}[h]
\newcommand{\scale}{0.7}
\begin{tabular}{cccccccc}
$g = $ &
$s = 0$ &
$s = 2$ &
$s = 4$ &
$s = 6$ &
$s = 8$ &
$s = 10$ &
\\
$3$ &
\includegraphics[scale=\scale]{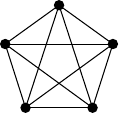} &
\includegraphics[scale=\scale]{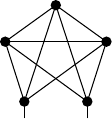} &
\includegraphics[scale=\scale]{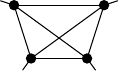} &
\includegraphics[scale=\scale]{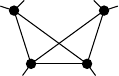} &&
\\
$4$ &
\includegraphics[scale=\scale]{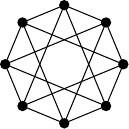} &
\includegraphics[scale=\scale]{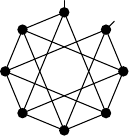} &
\includegraphics[scale=\scale]{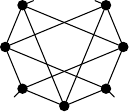} &
\includegraphics[scale=\scale]{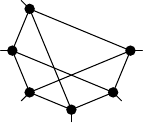} &
\includegraphics[scale=\scale]{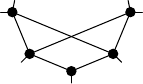} &
\\
$5$ &
\includegraphics[scale=\scale]{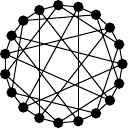} &
\includegraphics[scale=\scale]{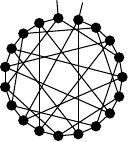} &
\includegraphics[scale=\scale]{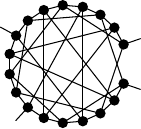} &
\includegraphics[scale=\scale]{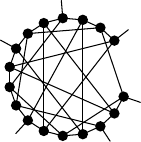} &
\includegraphics[scale=\scale]{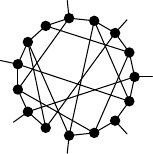} &
\includegraphics[scale=\scale]{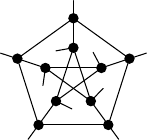}
\\
\end{tabular}
\caption{Smallest nontrivial $(4, g, s)$-multipoles for small values of $g$ and $s$}
\label{tab:g4-figures}
\end{table}

We conclude this section by proving a useful structural lemma for nontrivial $(k, g, s)$-multipoles.

\begin{lemma}
    \label{lemma:not-too-many-semiedges}
    For each $k$, $g$ and $s$, if there exists a nontrivial $(k, g, s)$-multipole $G$, then there exists a nontrivial $(k, g, s)$-multipole $H$ of the same order $|G|$ that has no vertex with inner degree $1$.
\end{lemma}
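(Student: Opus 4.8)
The plan is to fix the inner-degree-$1$ vertices one at a time by a local rewiring that keeps the order, the regularity, the number $s$ of semiedges and the nontriviality intact, while strictly decreasing the number of vertices of inner degree $1$. The governing constraint is the identity $k|G| = \sum_{v}\deg_{\mathrm{in}}(v) + s$, so with $|G|$ and $s$ fixed the total inner degree is fixed; hence any operation that raises the inner degree of one vertex must lower it somewhere else by the same amount. I would therefore look for a move that transfers one unit of inner degree from a vertex of inner degree at least $3$ to a chosen vertex of inner degree $1$.

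First I would establish the enabling fact that such a donor always exists: in a nontrivial $(k,g,s)$-multipole that has a vertex of inner degree $1$, some vertex has inner degree at least $3$. Suppose not, so all inner degrees are at most $2$; then the subgraph $L$ consisting of the links is a disjoint union of paths and cycles, and the vertices of inner degree $1$ are exactly the endpoints of the nontrivial paths, which come in pairs. Since one such vertex exists there are at least two, whence $\sum_v \deg_{\mathrm{in}}(v) = 2|E(L)| \le 2|G| - 2$ and consequently $s = k|G| - \sum_v \deg_{\mathrm{in}}(v) \ge (k-2)|G| + 2$. As the multipole is cyclic with girth at least $g$ it has more than $g$ vertices (a cyclic multipole on exactly $g$ vertices of girth $\ge g$ is the $g$-cycle and has no inner-degree-$1$ vertex), so $|G|>g$ and thus $s \ge (k-2)g+2 > (k-2)g$, contradicting $s\le (k-2)g$ from the definition of nontriviality. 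The same remark shows $|G|>g$ throughout, so ``not a $g$-cycle'' will be automatic and nontriviality will reduce to cyclicity.

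With a donor $w$ (inner degree $\ge 3$) and the target leaf $v$ (unique link $uv$, together with $k-1\ge 2$ semiedges) in hand, the move is a semiedge slide: choose a link $wa$ incident with $w$ with $a\neq v$ (possible since $w$ has at least two links, at most one to $v$), delete $wa$, and use one semiedge of $v$ to add the link $va$, giving $w$ a new semiedge in compensation. This keeps every vertex $k$-regular, leaves $|G|$ and $s$ unchanged, raises $\deg_{\mathrm{in}}(v)$ from $1$ to $2$ and lowers $\deg_{\mathrm{in}}(w)$ from $\ge 3$ to $\ge 2$, while no other inner degree changes; hence the number of inner-degree-$1$ vertices drops by one. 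Deleting a link cannot decrease the girth, and I would choose the move so that cyclicity survives (avoiding a removed link that is the unique link lying on every cycle), after which iterating the move terminates and produces the desired $H$.

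The main obstacle is girth preservation for the one newly added link $va$. Because $v$ is a leaf, every cycle through the new edge must leave $v$ along $uv$ and return along $va$, so the shortest new cycle has length $2 + \dist(u,a)$; the move is therefore safe precisely when $a$ lies at distance at least $g-2$ from $u$, or in a different component (distance $\infty$). Guaranteeing such a choice is the technical heart: if some vertex of inner degree $\ge 3$ lies in a component other than that of $v$, any of its link-neighbors works; otherwise I would exploit that the multipole is cyclic to locate the new endpoint far from $u$, replacing the basic slide by the variant that deletes a link between two high-degree vertices and spends a semiedge of $v$ on a distant vertex $y$ (which, since $k\ge 3$, still carries a semiedge) lying at distance $\ge g-2$ from $u$ on a long cycle, so that the created cycle through $u,v,y$ is long enough. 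Handling the remaining configurations --- when high-degree vertices cluster near $u$ and are mutually non-adjacent --- by a careful distance and case analysis is where the real work lies; everything else is bookkeeping enforced by the conservation identity above.
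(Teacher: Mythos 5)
Your proposal has a genuine gap at exactly the point you flag as ``the technical heart'': you never prove that a girth-safe rewiring exists. The semiedge slide adds a link $va$, and as you correctly compute, this is safe only if $\dist(u,a)\ge g-2$ after the deletion. But nothing guarantees any admissible attachment point at that distance: the multipole may well have every vertex within distance $g-3$ of $u$ (girth $g$ only forces diameter roughly $g/2$, not $g-2$), in which case \emph{no} slide at $v$ preserves girth and your fallback variant is needed --- yet that variant requires simultaneously a link joining two inner-degree-$\ge 3$ vertices and a vertex $y$ at distance $\ge g-2$ from $u$ that carries a spare semiedge, and neither existence is established. The parenthetical claim that a distant vertex $y$ ``still carries a semiedge'' since $k\ge 3$ is simply unjustified: a vertex of inner degree $k$ carries none, and when $s$ is small most vertices carry none. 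So the induction on the number of inner-degree-$1$ vertices is set up correctly (your donor-existence argument via $s\le(k-2)g$ and the conservation identity is sound, as is the observation that $|G|>g$ makes ``not a $g$-cycle'' automatic), but the inductive step is unproven precisely in the hard configurations, and it is not clear it can be completed without substantial additional argument.

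The paper avoids this difficulty entirely by not insisting on keeping the vertex set: instead of re-attaching the leaf $v$ by a new link, it deletes $v$ outright (turning the link $uv$ into a semiedge at $u$, which removes $k-1$ semiedges and adds one) and restores the order by subdividing an arbitrary link $e$ with a new vertex $w$ carrying $k-2$ semiedges. Both operations are monotone for girth --- vertex deletion cannot create cycles, and subdivision only lengthens them --- so no distance analysis is needed at all; cyclicity survives because no cycle of $H$ passes through the inner-degree-$1$ vertex $v$, and the result cannot be a $g$-cycle since un-subdividing would exhibit a $(g-1)$-cycle in $H$. Your approach founders on the one constraint (adding a link without creating a cycle shorter than $g$) that the paper's construction is designed never to face; if you want to salvage your route, you would have to either prove the existence of a far attachment point with a free semiedge in every nontrivial configuration, or switch to an order-preserving operation, such as the paper's delete-and-subdivide, that never adds a link at all.
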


\begin{proof}
    Let $H$ be a nontrivial $(k, g, s)$-multipole of the same order as $G$ with $x$ vertices with inner degree $1$, where $x$ is the smallest possible. Suppose to the contrary that $x > 0$. Let $v$ be a vertex of $H$ incident with $k - 1$ semiedges and one link with the other end vertex $u$, and let $e$ be another link of $H$ (for instance, a link from a cycle of $H$). From $H$, we create the multipole $H'$ as follows. We remove the vertex $v$ from $H$ together with $k - 1$ semiedges incident with $v$ leaving the link $uv$ of $H$ as the semiedge in $H'$. Then, we subdivide $e$ with one new vertex $w$ incident with $k - 2$ semiedges. The multipole $H$' still has $s$ semiedges and a finite girth at least $g$. Also, $H'$ is not a $g$-cycle, for otherwise, $H$ would contain a cycle of length at most $g - 1$. Thus, $H'$ is a nontrivial $(k, g, s)$-multipole with less than $x$ vertices incident with $k - 1$ semiedges -- a contradiction. 
\end{proof}

\section{\texorpdfstring{Bounds on the size of $(k, g, s)$-multipoles}{Bounds on the size of (k, g, s)-multipoles}}
\label{sec:bounds}

In this section, we present lower bounds on the size of nontrivial $(k, g, s)$-multipoles.
In some statements, we consider only $g \ge 5$. We deal with the not-so-difficult cases $g \in \{3, 4\}$ in Section \ref{sec:results}.
Just as in the proof of the Moore bound, we shall consider the set of all the vertices of $G$ at distance $d$ from some chosen vertex or edge.
However, in contrast to $(k, g)$-graphs, this set will not form a complete Moore tree and will often contain less than $M(k, g)$ vertices depending on the depths of the semiedges.

\begin{proposition}
\label{prop:moore-bound-multipoles}
    Let $G$ be a $(k, g, s)$-multipole with $s$ semiedges $f_1, f_2, \dots, f_s$; and let $d = \lfloor (g - 1)/2\rfloor$. Let $x$ be a vertex of $G$ if $g$ is odd, and let $x$ be an edge of $G$ if $g$ is even. Then
    $$|G| \ge M(k, g) - \frac{1}{k - 2} \sum_{i = 1}^s \left((k - 1)^{d - \dist(x, f_i)} - 1\right).$$
\end{proposition}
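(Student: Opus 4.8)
The plan is to mimic the proof of the Moore bound by growing a breadth-first search from the root $x$ (a vertex when $g$ is odd, an edge when $g$ is even) and comparing the result with the complete Moore tree on $M(k,g)$ vertices. Write $d = \lfloor (g-1)/2\rfloor$ and let $B$ be the set of vertices at distance at most $d$ from $x$. The first step is to show that $B$, together with the edges joining consecutive levels, forms a tree (for odd $g$) or a tree containing the edge $x$ with its two endpoints as roots (for even $g$). This is where the girth hypothesis enters: two internally disjoint paths of length at most $d$ leaving $x$ that met again would close a cycle of length at most $2d \le g-1 < g$ in the odd case, or at most $2d+1 \le g-1 < g$ through the edge $x$ in the even case, contradicting girth at least $g$. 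Consequently every vertex of $B$ at positive depth has a unique parent, and the levels grow exactly as in a Moore tree except where a slot is occupied by a semiedge.

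Next I would quantify the effect of each semiedge. A vertex $w$ at depth $\delta = \dist(x, f)$ carrying a semiedge $f$ uses one of the forward slots that, in the complete Moore tree, would root a complete $(k-1)$-ary subtree reaching down to depth $d$; such a subtree has exactly $\sum_{\ell=0}^{d-\delta-1}(k-1)^\ell = \frac{(k-1)^{d-\delta}-1}{k-2}$ vertices. Because $B$ is a tree, the subtrees pruned at distinct half-edge slots are pairwise disjoint, so $B$ consists of the root together with all of its surviving descendants, giving $|B| = M(k,g) - \frac{1}{k-2}\sum_{i:\,\dist(x,f_i)\le d}\bigl((k-1)^{d-\dist(x,f_i)}-1\bigr)$. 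Since $|G| \ge |B|$, this already establishes the claimed inequality as far as the semiedges lying within distance $d$ of $x$ are concerned; note the summand vanishes for a semiedge at depth exactly $d$, where $\frac{(k-1)^0-1}{k-2}=0$, matching the fact that a leaf of the tree carries no subtree. The even case is handled identically, with the complete Moore tree replaced by the double tree rooted at the edge $x$ and the same per-semiedge count $\frac{(k-1)^{d-\delta}-1}{k-2}$.

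The delicate point, and the step I expect to be the main obstacle, is incorporating the semiedges with $\dist(x,f_i) > d$: for these, $(k-1)^{d-\dist(x,f_i)}-1 < 0$, so their terms \emph{strengthen} the bound and cannot be recovered from $B$ alone. Here I would exploit the vertices of $G$ lying beyond depth $d$, which are not counted in $B$: a semiedge at depth $\delta > d$ is attached to a genuine vertex reached by a path of length $\delta$, so such semiedges force additional deep vertices. A level-by-level charging argument, assigning to each deep semiedge its negative contribution $\frac{1-(k-1)^{d-\delta}}{k-2} \in \bigl(0,\tfrac{1}{k-2}\bigr)$ and distributing it among the deeper vertices carrying it, shows that these extra vertices compensate exactly for the negative terms—the extremal instance being a depth-$(d+1)$ vertex bearing its maximal $k-1$ semiedges, whose total credit equals one vertex. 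This compensation requires that deep semiedges be genuinely attached to deep vertices, so one must work within the component of $x$ (equivalently assume $G$ connected); controlling this global balance between deep vertices and out-of-ball semiedges is the part of the argument that needs the most care.
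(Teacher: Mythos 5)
Your proposal follows essentially the same route as the paper's proof: grow the ball of radius $d = \lfloor (g-1)/2 \rfloor$ around $x$, use the girth hypothesis to see that it is a (truncated) Moore tree, subtract for each semiedge within distance $d$ a pruned complete $(k-1)$-ary subtree of $\left((k-1)^{d - \dist(x, f_i)} - 1\right)/(k-2)$ vertices, and compensate the negative terms of semiedges at distance greater than $d$ by the vertices outside the ball that carry them. If anything, your level-by-level charging for the deep semiedges is more careful than the paper's one-line remark, which bounds each deep semiedge's credit by $1/(k-1)$ — an estimate valid only at depth exactly $d+1$, whereas for deeper semiedges the credit tends to $1/(k-2)$ and one must also invoke the intermediate path vertices outside the ball, exactly as your charging scheme does.
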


\begin{proof}
    We consider the set $T$ of vertices at distance at most $d$ from $x$. Due to the girth of $G$, the edges at distance at most $d - 1$ from $x$ induce a tree. If $G$ contained no semiedges, then $T$ would have $M(k, g)$ vertices. Each semiedge $f_i$ removes from this Moore tree a complete $(k - 1)$-ary tree of depth $d - \dist(x, f_i) - 1$ vertices, which has $\left((k - 1)^{d - \dist(x, f_i)} - 1\right)/(k - 2)$ vertices.
    
    Note that if $\dist(x, f_i) > d$, then $-((k - 1)^{d - \dist(x, f_i)} - 1)/(k - 2) \ge - 1/(k - 1)$.
	So each semiedge $f_i$ at distance more than $d$ from $x$ increases the bound by at most $1/(k - 1)$, which is all right since the end vertex, which can be incident with at most $(k - 1)$ semiedges, is not contained in $T$.
\end{proof}

Now comes the question, which vertex or edge should we choose for counting the lower bound from Proposition \ref{prop:moore-bound-multipoles}.
It turns out that using all of them, or in other words considering the average bound through all the vertices (links), yields a sufficiently high lower bound.

\begin{proposition}
\label{prop:quadratic-odd}
Let $k \ge 3$, $d \ge 1$ and $s \ge 0$ be integers and $G$ be a $(k, 2d + 1, s)$-multipole. Then the order $n$ of $G$ satisfies the quadratic inequality
\begin{equation}
    P_{k, 2d + 1, s}(n) := n^2 - M(k, 2d + 1)n + \frac{s(dk - 2d - 1)(k - 1)^d + s}{(k - 2)^2} \ge 0.\label{eq:quadratic-odd}
\end{equation}
\end{proposition}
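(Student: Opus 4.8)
The plan is to apply Proposition~\ref{prop:moore-bound-multipoles} not at a single cleverly chosen vertex, but at \emph{every} vertex of $G$ simultaneously, and then add the resulting $n$ inequalities. Since $g = 2d+1$ is odd, for each vertex $x \in V(G)$ the proposition gives
\[
    n \ge M(k, 2d+1) - \frac{1}{k-2}\sum_{i=1}^s\left((k-1)^{d - \dist(x, f_i)} - 1\right).
\]
Summing over all $x$ and using $\sum_{x} n = n^2$ yields
\[
    n^2 \ge M(k, 2d+1)\, n - \frac{1}{k-2}\sum_{i=1}^s S_i,
    \qquad\text{where } S_i := \sum_{x \in V(G)}\left((k-1)^{d - \dist(x, f_i)} - 1\right).
\]
It therefore suffices to bound each $S_i$ from above by $\bigl((dk-2d-1)(k-1)^d + 1\bigr)/(k-2)$, since summing that bound over the $s$ semiedges produces exactly the constant term of $P_{k,2d+1,s}$.

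The heart of the argument is the estimate of $S_i$. Let $v_i$ be the vertex incident with the semiedge $f_i$, so $\dist(x, f_i) = \dist(x, v_i)$, and let $a_j$ be the number of vertices at distance exactly $j$ from $v_i$. I split the sum at $j = d$: the summand $(k-1)^{d-j} - 1$ is positive for $j < d$, vanishes at $j = d$, and is negative for $j > d$, so discarding the deep levels gives $S_i \le \sum_{j=0}^d a_j\bigl((k-1)^{d-j} - 1\bigr)$. To bound the level sizes, note that $v_i$ carries a semiedge, whence $a_1 \le k - 1$; and for $j \ge 2$, every vertex at distance $j - 1$ from $v_i$ sends at least one edge back toward $v_i$ and hence reaches at most $k - 1$ vertices at distance $j$, giving $a_j \le (k-1) a_{j-1}$ and thus $a_j \le (k-1)^j$ for all $j \ge 0$. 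Substituting $a_j \le (k-1)^j$ collapses each summand to $(k-1)^d - (k-1)^j$, so
\[
    S_i \le \sum_{j=0}^d \left((k-1)^d - (k-1)^j\right) = (d+1)(k-1)^d - \frac{(k-1)^{d+1} - 1}{k - 2},
\]
and a routine simplification rewrites the right-hand side as $\bigl((dk - 2d - 1)(k-1)^d + 1\bigr)/(k-2)$.

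Assembling the pieces, feeding this bound into the averaged inequality gives
\[
    n^2 \ge M(k, 2d+1)\, n - \frac{s\bigl((dk-2d-1)(k-1)^d + 1\bigr)}{(k-2)^2},
\]
which, after moving everything to one side, is precisely $P_{k,2d+1,s}(n) \ge 0$. I expect the main obstacle to lie in the estimate of $S_i$ rather than in the averaging step. Two points require care: first, that discarding the levels $j > d$ is legitimate because their contributions are genuinely nonpositive — which is exactly why the split is placed at $j = d$, where the summand vanishes; and second, that the crude level bound $a_j \le (k-1)^j$, which exploits the semiedge at $v_i$ to save one factor over the naive $k(k-1)^{j-1}$, is just strong enough to make the collapsed geometric sum equal the target constant. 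Note that the girth hypothesis enters only through Proposition~\ref{prop:moore-bound-multipoles}; the level-size bound itself uses only $k$-regularity and the presence of the semiedge, so no additional girth argument is needed for the $S_i$ estimate.
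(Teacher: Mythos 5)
Your proposal is correct and follows essentially the same route as the paper's proof: averaging the bound of Proposition~\ref{prop:moore-bound-multipoles} over all $n$ vertices, exchanging the order of summation, and bounding the per-semiedge sum via the level-size estimate of at most $(k-1)^h$ vertices at distance $h$ from $f_i$, which collapses to the same constant $\bigl((dk-2d-1)(k-1)^d+1\bigr)/(k-2)$ (your extra $j=d$ term vanishes). Your treatment is in fact slightly more careful than the paper's, since you explicitly justify discarding the levels beyond $d$ and derive $a_j \le (k-1)^j$ from the semiedge at the root vertex, points the paper leaves implicit.
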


\begin{proof}
Let $f_1, f_2, \dots, f_s$ be the semiedges of $G$ and let $M = M(k, 2d + 1)$. According to Proposition \ref{prop:moore-bound-multipoles}, for a vertex $v \in V(G)$ we have
\begin{equation*}
    n \ge M - \frac{1}{k - 2}\sum_{i = 1}^{s}\left((k - 1)^{d - \dist(v, f_i)} - 1\right).
\end{equation*}
We sum this bound for each vertex $v$ of $G$ obtaining 
\begin{align}
    \sum_{v \in V(G)}n &\ge \sum_{v \in V(G)}M - \frac{1}{k - 2}\sum_{v \in V(G)}\sum_{i = 1}^{s}\left((k - 1)^{d - \dist(v, f_i)} - 1\right), \nonumber\\
    n^2 &\ge Mn - \frac{1}{k - 2}\sum_{i = 1}^{s}\sum_{v \in V(G)}\left((k - 1)^{d - \dist(v, f_i)} - 1\right). \label{eq:quadratic-odd-1}
\end{align}

After reversing the order of the summation, we estimate the inner sum using that there are at most $(k - 1)^h$ vertices at distance $h$ from $f_i$ and each of them contributes to the sum by the value $(k - 1)^{d - h} - 1$.

\begin{gather*}
    \sum_{v \in V(G)}\left((k - 1)^{d - \dist(v, f_i)} - 1\right) \le \sum_{h = 0}^{d - 1}(k - 1)^{h}\left((k - 1)^{d - h} - 1\right) = \\
    = \sum_{h = 0}^{d - 1}\left( (k - 1)^d - (k - 1)^h\right) = d(k-1)^d - \frac{(k - 1)^d - 1}{k - 2} = \frac{(dk - 2d - 1)(k - 1)^d + 1}{k - 2}.
\end{gather*}

We plug this back into Equation (\ref{eq:quadratic-odd-1}) which yields
\begin{align}
    n^2 &\ge Mn - \frac{1}{k - 2}\sum_{i = 1}^{s}\frac{(dk - 2d - 1)(k - 1)^d + 1}{k - 2},\nonumber\\
    n^2 &- Mn + \frac{s(dk - 2d - 1)(k - 1)^d + s}{(k - 2)^2} \ge 0.\nonumber
\end{align}
\end{proof}

\begin{proposition}
\label{prop:quadratic-even}
Let $k \ge 3$, $d \ge 1$ and $s \ge 0$ be integers and $G$ be a $(k, 2d + 2, s)$-multipole and let $M = M(k, 2d + 2)$ denote the Moore bound for a $(k, 2d + 2)$-cage. Then the order $n$ of $G$ satisfies the quadratic inequality
\begin{equation}\label{eq:quadratic-even}
P_{k, 2d + 2, s}(n) := kn^2 - (kM + s)n + Ms + 2s\cdot\frac{(dk - 2d - 1)(k - 1)^{d + 1} + k - 1}{(k - 2)^2} \ge 0.
\end{equation}
\end{proposition}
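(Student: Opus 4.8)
The plan is to mimic the proof of Proposition \ref{prop:quadratic-odd}, replacing the average over vertices by an average over links. Since $g = 2d + 2$ is even, Proposition \ref{prop:moore-bound-multipoles} is applied with $x$ ranging over the edges of $G$, and the natural choice is to let $x$ run through the \emph{links} of $G$, so that the associated Moore tree is rooted at a genuine edge. First I would record that a $k$-regular multipole with $n$ vertices and $s$ semiedges has exactly $(kn - s)/2$ links, since counting edge-endpoints gives $kn = 2\cdot(\#\text{links}) + s$.

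Next, for every link $x$ Proposition \ref{prop:moore-bound-multipoles} yields
$$n \ge M - \frac{1}{k-2}\sum_{i=1}^s\left((k-1)^{d-\dist(x, f_i)} - 1\right),$$
and I would sum this over all $(kn-s)/2$ links. The left-hand side becomes $\frac{kn-s}{2}\,n$, while the right-hand side is $\frac{kn-s}{2}M$ minus a correction term. Reversing the order of summation, it remains to bound, for each fixed semiedge $f_i$, the quantity $\sum_{x}\big((k-1)^{d-\dist(x, f_i)} - 1\big)$ taken over all links $x$.

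The one genuinely new ingredient compared with the odd case is the count of links at a given distance from $f_i$. If $f_i$ is incident with the vertex $w_i$, then within the ball of radius $d$ around $w_i$---which is a tree because the girth is $2d+2$---there are at most $(k-1)^{h+1}$ links at distance exactly $h$ from $f_i$, each being the parent edge of one of the at most $(k-1)^{h+1}$ vertices at depth $h+1$. This is exactly one factor of $(k-1)$ more than the vertex count used in Proposition \ref{prop:quadratic-odd}. Discarding the non-positive terms with $\dist(x, f_i) \ge d$, I obtain
$$\sum_{x}\left((k-1)^{d-\dist(x, f_i)} - 1\right) \le \sum_{h=0}^{d-1}(k-1)^{h+1}\left((k-1)^{d-h} - 1\right) = (k-1)\sum_{h=0}^{d-1}(k-1)^{h}\left((k-1)^{d-h} - 1\right),$$
and the remaining sum was already evaluated in the proof of Proposition \ref{prop:quadratic-odd} to equal $\big((dk-2d-1)(k-1)^d + 1\big)/(k-2)$. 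Hence the inner sum is at most $\big((dk-2d-1)(k-1)^{d+1} + (k-1)\big)/(k-2)$.

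Finally I would substitute this estimate back and multiply through by $2$ to clear the factor $\tfrac12$ coming from the link count, turning $\frac{kn-s}{2}\,n \ge \frac{kn-s}{2}M - \frac{s}{(k-2)^2}\big((dk-2d-1)(k-1)^{d+1} + (k-1)\big)$ into $kn^2 - sn \ge kMn - sM - \frac{2s}{(k-2)^2}\big((dk-2d-1)(k-1)^{d+1} + (k-1)\big)$, which rearranges to exactly $P_{k,2d+2,s}(n) \ge 0$. The only points requiring care are the bookkeeping of the link count $(kn-s)/2$, which is what produces the leading coefficient $k$ and the $-(kM+s)n$ term, and the justification that the radius-$d$ ball around $w_i$ is a tree, so that the bound $(k-1)^{h+1}$ on the number of links at distance $h$ is legitimate; beyond reusing the odd-case summation I expect no essential difficulty.
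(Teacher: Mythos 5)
Your proposal is correct and takes essentially the same approach as the paper: it averages the bound of Proposition \ref{prop:moore-bound-multipoles} over all $(kn-s)/2$ links, bounds the number of links at distance $h$ from a semiedge by $(k-1)^{h+1}$, and sums the resulting terms to get the estimate $\bigl((dk-2d-1)(k-1)^{d+1}+k-1\bigr)/(k-2)$ before rearranging. The only cosmetic difference is that you evaluate the inner sum by factoring out $(k-1)$ and reusing the odd-girth computation from Proposition \ref{prop:quadratic-odd}, whereas the paper computes it directly; the results agree.
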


\begin{proof}
Let $f_1, f_2, \dots, f_s$ be the semiedges of $G$. According to Proposition \ref{prop:moore-bound-multipoles}, for some link $e$ of $G$ we have
\begin{equation}
    n \ge M - \frac{1}{k - 2}\sum_{i = 1}^{s}\left((k - 1)^{d - \dist(e, f_i)} - 1\right).
\end{equation}
We sum this bound for each link $e$ of $G$. Observe that $G$ has $\frac12(kn - s)$ links.
\begin{align}
    \sum_{e}n &\ge \sum_{e}M - \frac{1}{k - 2}\sum_{e}\sum_{i = 1}^{s}\left((k - 1)^{d - \dist(e, f_i)} - 1\right),\nonumber\\
    n(kn - s) &- M(kn - s) + \frac{2}{k - 2}\sum_{i = 1}^{s}\sum_{e}\left((k - 1)^{d - \dist(e, f_i)} - 1\right) \ge 0. \label{eq:quadratic-even-1}
\end{align}

Using the fact that there are at most $(k - 1)^{h + 1}$ links $e$ at distance $h$ from $f_i$, we estimate the inner sum $\sum_{e}\left((k - 1)^{d - \dist(e, f_i)} - 1\right)$ is at most

\begin{gather*}
     \sum_{h = 0}^{d - 1}(k - 1)^{h + 1}\left((k - 1)^{d - h} - 1\right)
    = \sum_{h = 0}^{d - 1}\left( (k - 1)^{d + 1} - (k - 1)^{h + 1}\right) =\\
    = d(k - 1)^{d + 1} - \frac{(k - 1)^{d + 1} - k + 1}{k - 2} = 
    \frac{(dk - 2d - 1)(k - 1)^{d + 1} + k - 1}{k - 2}.
\end{gather*}

Plugging this back to (\ref{eq:quadratic-even-1}), we have
\begin{align}
    n(kn - s) - M(kn - s) + \frac{2}{k - 2}\sum_{i = 1}^{s}\frac{(dk - 2d - 1)(k - 1)^{d + 1} + k - 1}{k - 2} &\ge 0,\nonumber\\
    kn^2 - (kM + s)n + Ms + 2s\cdot\frac{(dk - 2d - 1)(k - 1)^{d + 1} + k - 1}{(k - 2)^2} &\ge 0. \nonumber
\end{align}
\end{proof}

Propositions \ref{prop:quadratic-odd} and \ref{prop:quadratic-even} show that the order of a $(k, g, s)$-multipole $G$ satisfies the quadratic inequality $P_{k, g, s}(|G|) \ge 0$, which is either Inequality (\ref{eq:quadratic-odd}) for odd $g$, or Inequality (\ref{eq:quadratic-even}) for even $g$.
The parabola corresponding to the quadratic polynomial $P_{k,g,s}$ has its vertex at the point
$$v(k, g, s) =
\begin{cases}
\frac12 M(k, g),& \text{if } g \text{ is odd},\\
\frac12 M(k, g) + \frac{s}{2k},& \text{if } g \text{ is even}.
\end{cases}
$$
Observe that $v(k, g, s) \ge \frac12M(k, g)$ in both cases. We denote the real roots of $P_{k, g, s}$, if they exist, by $b_1(k, g, s)$ and $b_2(k, g, s)$ in such a way that $b_1(k, g, s) \le b_2(k, g, s)$. If $P_{k,g,s}$ has no real roots, we define $b_1(k, g, s) = b_2(k, g, s) = v(k, g, s)$. Thus, for each $(k, g, s)$-multipole $G$ we have
$$|G| \le b_1(k, g, s) \le v(k, g, s) \qquad \text{or} \qquad |G| \ge b_2(k, g, s) \ge v(k, g , s).$$

\begin{table}[h!]
    \centering
    \begin{tabular}{|c|ccccccc|}
\hline
g = &   $k= 3$ &   $k= 4$ &   $k= 5$ &   $k= 6$ &   $k= 7$ &   $k= 8$ &   $k= 9$ \\\hline
 3  & 1         & 2         & 3         & 3         & 3         & 3         & 3         \\
 4  & 3.3       & (5)       & (6.2)     & (7.3)     & (8.4)     & 9         & 8.2       \\
 5  & 5         & 7         & 7.2       & 7.4       & 7.7       & 7.9       & 8.0       \\
 6  & (8)       & 12        & 12.3      & 13.0      & 13.5      & 13.9      & 14.3      \\
 7  & 9.6       & 11.5      & 12.9      & 13.9      & 14.7      & 15.4      & 16.0      \\
 8  & 14        & 16.9      & 19.3      & 21.1      & 22.5      & 23.5      & 24.4      \\
 9  & 13.6      & 17.9      & 20.9      & 23.1      & 24.8      & 26.1      & 27.1      \\
10  & 18.3      & 24.6      & 29.0      & 32.2      & 34.6      & 36.4      & 37.9      \\
11  & 18.9      & 26.2      & 31.4      & 35.1      & 37.8      & 39.9      & 41.6      \\
12  & 24.3      & 34.4      & 41.3      & 46.2      & 49.8      & 52.6      & 54.7      \\
13  & 25.3      & 36.7      & 44.5      & 49.9      & 53.9      & 56.9      & 59.2      \\
14  & 31.6      & 46.4      & 56.3      & 63.1      & 68.0      & 71.8      & 74.7      \\
15  & 33.1      & 49.3      & 60.1      & 67.5      & 72.9      & 76.9      & 80.0      \\
16  & 40.3      & 60.5      & 73.7      & 82.7      & 89.2      & 94.0      & 97.8      \\
\hline
    \end{tabular}
    \caption{
    Approximate values of $b_1(k, g, (k - 2)g)$. The values in parentheses indicate the cases when 
    $b_1(k, g, (k - 2)g)=v(k, g, (k-2)g)$.}
    \label{tab:b1-bounds}
\end{table}
\begin{table}[h!]
    \centering
    \newcommand{\cell}[2]{$\footnotesize\begin{array}{c}#1 \\ #2\end{array}$}
    \begin{tabular}{|c|ccccccc|}
\hline
g = &   $k= 3$ &   $k= 4$ &   $k= 5$ &   $k= 6$ &   $k= 7$ &   $k= 8$ &   $k= 9$ \\\hline
 3  & 1         & 2         & 3         & 3         & 3         & 3         & 3         \\
 4  & 3.3       & (5)       & (6.2)     & (7.3)     & (8.4)     & 9         & 8.2       \\
 5  & 5         & 7         & 7.2       & 7.4       & 7.7       & 7.9       & 8.0       \\
 6  & (8)       & 12        & 12.3      & 13.0      & 13.5      & 13.9      & 14.3      \\
 7  & 9.6       & 11.5      & 12.9      & 13.9      & 14.7      & 15.4      & 16.0      \\
 8  & 14        & 16.9      & 19.3      & 21.1      & 22.5      & 23.5      & 24.4      \\
 9  & 13.6      & 17.9      & 20.9      & 23.1      & 24.8      & 26.1      & 27.1      \\
10  & 18.3      & 24.6      & 29.0      & 32.2      & 34.6      & 36.4      & 37.9      \\
11  & 18.9      & 26.2      & 31.4      & 35.1      & 37.8      & 39.9      & 41.6      \\
12  & 24.3      & 34.4      & 41.3      & 46.2      & 49.8      & 52.6      & 54.7      \\
13  & 25.3      & 36.7      & 44.5      & 49.9      & 53.9      & 56.9      & 59.2      \\
14  & 31.6      & 46.4      & 56.3      & 63.1      & 68.0      & 71.8      & 74.7      \\
15  & 33.1      & 49.3      & 60.1      & 67.5      & 72.9      & 76.9      & 80.0      \\
16  & 40.3      & 60.5      & 73.7      & 82.7      & 89.2      & 94.0      & 97.8      \\
\hline
    \end{tabular}
    \caption{Approximate values of $b_2(k, g, (k - 2)g)$.
    }
    \label{tab:b2-bounds}
\end{table}

\begin{lemma}
    \label{lemma:incr-decr-s-bounds}
    For any fixed $k \ge 3$ and $g \ge 3$, if the values $b_1(k, g, s)$ and $b_2(k, g, s)$ are roots of $P_{k,g,s}$, then for each $t < s$ it is true that
    $b_1(k, g, t) < b_1(k, g, s)$ and $b_2(k, g, t) > b_2(k, g, s).$
\end{lemma}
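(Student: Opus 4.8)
The plan is to compare the roots of $P_{k,g,s}$ and $P_{k,g,t}$ by evaluating one polynomial at the roots of the other. Both are upward-opening parabolas: by Propositions~\ref{prop:quadratic-odd} and~\ref{prop:quadratic-even} their leading coefficients ($1$ for odd $g$ and $k$ for even $g$) are positive and independent of the number of semiedges, so each $P_{k,g,r}$ with real roots is negative precisely on the open interval $(b_1(k,g,r), b_2(k,g,r))$ and positive outside its closure. Consequently, if I can show that $P_{k,g,t}$ is strictly negative at both roots $b_1(k,g,s)$ and $b_2(k,g,s)$ of $P_{k,g,s}$, then $P_{k,g,t}$ attains negative values, hence has two real roots, and both $b_1(k,g,s)$ and $b_2(k,g,s)$ lie strictly inside $(b_1(k,g,t), b_2(k,g,t))$. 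This gives the nesting $b_1(k,g,t) < b_1(k,g,s) \le b_2(k,g,s) < b_2(k,g,t)$, which is exactly the conclusion. So the whole proof reduces to reading off the difference $P_{k,g,s} - P_{k,g,t}$ from the formulas and checking its sign at $b_1(k,g,s)$ and $b_2(k,g,s)$.

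For odd $g = 2d+1$ only the constant term of $P_{k,g,s}$ depends on $s$, so the difference is the positive constant $(s-t)\alpha$ with $\alpha = \bigl((dk-2d-1)(k-1)^d + 1\bigr)/(k-2)^2 > 0$ (here $dk-2d-1 = d(k-2)-1 \ge 0$ for $d \ge 1$, $k \ge 3$). Since $P_{k,g,s}$ vanishes at its roots, $P_{k,g,t}(b_i(k,g,s)) = -(s-t)\alpha < 0$ for $i = 1,2$ and every $t < s$, with no extra hypotheses; the odd case is therefore immediate.

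For even $g = 2d+2$ the difference works out to $P_{k,g,s}(n) - P_{k,g,t}(n) = (s-t)(M + 2\beta - n)$, where $M = M(k,g)$ and $\beta = \bigl((dk-2d-1)(k-1)^{d+1} + (k-1)\bigr)/(k-2)^2 > 0$. Evaluating at the roots of $P_{k,g,s}$ gives $P_{k,g,t}(b_i(k,g,s)) = -(s-t)\bigl(M + 2\beta - b_i(k,g,s)\bigr)$, which is negative exactly when $b_i(k,g,s) < M + 2\beta$. The binding case is the larger root, and here I would prove the stronger bound $b_2(k,g,s) \le M < M + 2\beta$. This follows from two facts: a direct substitution gives $P_{k,g,s}(M) = 2s\beta \ge 0$, so $M$ lies outside the open root interval of $P_{k,g,s}$; and the vertex satisfies $v(k,g,s) = \tfrac12 M + \tfrac{s}{2k} \le M$ whenever $s \le kM$. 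Together these force $M \ge b_2(k,g,s)$, completing the even case.

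The main obstacle is precisely this even-case bound $b_2(k,g,s) \le M$: unlike the odd case, the linear coefficient of $P_{k,g,s}$ also depends on $s$, so the parabola does not merely shift upward and the sign of the difference is not automatic. The condition $s \le kM$ is what keeps us on the branch of real-rootedness containing $s = 0$; I would justify it by noting that the even-$g$ discriminant $D(s)$ satisfies $D(kM) = -8k^2\beta M < 0$, so $kM$ falls in the gap where $P_{k,g,s}$ has no real roots, whence every $s$ on the branch through $0$ already satisfies $s < kM$ (and in particular the nontrivial range $s \le (k-2)g$ lies comfortably inside it). The remaining verifications, namely $\alpha, \beta > 0$ and the arithmetic of the difference $P_{k,g,s} - P_{k,g,t}$, are routine.
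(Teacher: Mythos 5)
Your proof is correct and follows essentially the same route as the paper: for odd $g$ only the constant coefficient of $P_{k,g,s}$ depends on $s$, and for even $g$ you write $P_{k,g,s}(n)-P_{k,g,t}(n)=(s-t)(c-n)$ with $c=M+2\beta$ and show the root interval of $P_{k,g,s}$ lies strictly below $c$, which is exactly the paper's argument (it proves $P_{k,g,s}(c)=kc(c-M)\ge 0$ and $c>M$, concluding $c>b_2(k,g,s)$). If anything, you are more careful than the paper: its proof silently assumes $c$ falls to the \emph{right} of the root interval, whereas your vertex comparison $v(k,g,s)\le M$ together with the discriminant check $D(kM)=-8k^2\beta M<0$ explicitly pins down the branch $s<kM$ on which this side-determination is valid --- a point the paper leaves implicit, and one that genuinely matters, since for $s$ on the upper discriminant branch $b_1(k,g,s)$ is in fact decreasing in $s$, so the restriction to the range $s\le (k-2)g$ used throughout the paper is what saves the statement.
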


\begin{proof}
    If $g$ is odd the statement trivially follows from the fact that with decreasing~$s$, only the constant coefficient of $P_{k, g, s}$ also decreases, so the distance of its roots $b_1(k, g, s)$ and $b_2(k, g, s)$ from $v(k, g, s)$ increases.
    
    For a given even $g \ge 4$ and a given $k \ge 3$, we can express $P_{k, g, s}$ as
    $$P_{k, g, s}(n) = kn^2 - (kM + s)n + sc,$$
    where
    $$c = M + 2\cdot\frac{(dk - 2d - 1)(k - 1)^{d + 1} + k - 1}{(k - 2)^2} > M.$$
    Since $P_{k, g, s}(c) = kc^2 - kMc = kc(c - M) \ge 0$ and $c > M$, we have that $c >b_2(k, g, s)$. Thus, for each $n$, where $b_1(k, g, s) \le n \le b_2(k, g, s) < c$, we have $t(c - n) < s(c - n)$ which is equivalent to $P_{k, g, t}(n) < P_{k, g, s}(n)$. Therefore, $b_1(k, g, t) < b_1(k, g, s)$ and $b_2(k, g, s) < b_2(k, g, t)$.
\end{proof}

Lemma~\ref{lemma:incr-decr-s-bounds} shows that the values $b_1(k, g, s)$ and $b_2(k, g, s)$ are most relevant when $s = (k - 2)g$. Therefore, we list these values in Tables \ref{tab:b1-bounds} and \ref{tab:b2-bounds}.

Since the values $b_1(k, g, s)$ and $b_2(k, g, s)$, obtained as the roots of $P_{k, g, s}$, are rather complicated, it is convenient to estimate them using simpler expressions. Instead of finding a universal estimation of $b_1(k, g, s)$ and $b_2(k, g, s)$, we prefer a simpler estimation in exchange for several small exceptions where it is not true.

\begin{proposition}
    \label{prop:simple-bounds}
    Let $k$, $g$, $s$ be integers satisfying $s \le (k - 2)g$ and one of the following conditions
    \begin{itemize}
        \item $k = 3$ and $g \ge 11$,
        \item $k = 4$ and $g \ge 7$,
        \item $k \in \{5, 6\}$ and $g \ge 5$,
        \item $k \in \{7, 8, 9, 10\}$ and $g \ge 3$ and $g \ne 4$,
        \item $k \ge 11$ and $g \ge 3$.
    \end{itemize}
    Then
    $$b_1(k, g, s) \le \frac{g^2}{2} \qquad \text{and} \qquad b_2(k, g, s) \ge M(k, g) - \frac{g^2}{2}.$$
\end{proposition}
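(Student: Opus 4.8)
The plan is to push everything onto the largest admissible number of semiedges and then to read off both bounds from the sign of a single quadratic. By Lemma~\ref{lemma:incr-decr-s-bounds}, as long as real roots exist, $b_1(k,g,s)$ is nondecreasing and $b_2(k,g,s)$ is nonincreasing in $s$; the same lemma shows that if $P_{k,g,(k-2)g}$ has real roots, then so does $P_{k,g,t}$ for every $t \le (k-2)g$. Hence it suffices to prove the two inequalities for $s = (k-2)g$. Because $P_{k,g,s}$ is an upward parabola (leading coefficient $1$ for odd $g$ and $k$ for even $g$), a real number lies in $[b_1,b_2]$ exactly when $P_{k,g,s}$ is nonpositive there. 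Thus both claims reduce to the sign conditions
\[
P_{k,g,(k-2)g}\left(\frac{g^2}{2}\right)\le 0 \qquad\text{and}\qquad P_{k,g,(k-2)g}\left(M(k,g)-\frac{g^2}{2}\right)\le 0,
\]
each of which simultaneously certifies that the two roots are real and places the corresponding test point between them.

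For odd $g=2d+1$ the parabola is symmetric about its vertex $M(k,g)/2$, and the two test points $g^2/2$ and $M(k,g)-g^2/2$ are reflections of one another across $M(k,g)/2$. Consequently the two sign conditions coincide, and a single one needs checking. Substituting $s=(k-2)(2d+1)$ into~\eqref{eq:quadratic-odd} and clearing denominators, the condition $P_{k,g,(k-2)g}(g^2/2)\le 0$ becomes the clean comparison
\[
(k+4d+2)(k-1)^d \;\ge\; \frac{(2d+1)^3(k-2)}{2} + 4d + 4 .
\]
For even $g=2d+2$ the vertex is shifted to $M(k,g)/2 + s/(2k)$, the two test points are no longer symmetric, and both sign conditions must be retained; substituting $s=2(k-2)(d+1)$ into~\eqref{eq:quadratic-even} yields two inequalities of the same flavour, pitting an exponential term in $(k-1)^{d+1}$ against a polynomial in $d$.

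What remains is to verify these explicit inequalities over the listed ranges, and this is the heart of the matter. Their left-hand sides grow like $(k-1)^d$ while the right-hand sides are only cubic in $d$ and linear in $k$, so for each fixed $k$ the inequality holds once $d$ is large enough, and for each fixed $d$ once $k$ is large enough; passing from $d$ to $d+1$ multiplies the dominant left-hand side by $k-1$ but enlarges the right-hand side only polynomially, which makes a short induction on $g$ effective. The main obstacle is that the thresholds are tight, so the boundary is genuinely exceptional rather than an artefact of slack in the estimate: for instance the entry $b_1(8,4,24)\approx 9 > 8 = g^2/2$ in Table~\ref{tab:b1-bounds} shows that the bound itself fails at $(k,g)=(8,4)$, and several odd pairs just below the stated thresholds violate the clean comparison above. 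I would therefore dispose of the finitely many small degrees $k\in\{3,\dots,10\}$ by locating, for each, the exact girth from which the inequalities hold (a direct evaluation or one-step induction), and treat all $k\ge 11$ uniformly with a single monotone estimate valid already at $g=3$; the even-girth inequalities, lacking the reflection symmetry of the odd case, are the most delicate and would be handled by applying the same strategy separately to each of their two sign conditions.
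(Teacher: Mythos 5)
Your proposal follows the paper's proof almost step for step: reduce to $s = (k-2)g$ via Lemma~\ref{lemma:incr-decr-s-bounds}, certify $P_{k,g,(k-2)g}(g^2/2) \le 0$, and finish with a case split handled by routine inductions --- your odd-girth inequality is, after multiplying by $2$, exactly the paper's (\ref{eq:simple-odd}), and your plan of treating $k \in \{3,\dots,10\}$ individually and $k \ge 11$ uniformly mirrors the paper's five odd-girth and four even-girth cases. The one place you deviate is the even-girth treatment, where you do strictly more work than necessary: the roots of any quadratic are symmetric about its vertex, so $b_2(k,g,s) = 2v(k,g,s) - b_1(k,g,s)$, and since $v(k,g,s) \ge \frac12 M(k,g)$ for \emph{both} parities (for even $g$ the vertex is $\frac12 M(k,g) + \frac{s}{2k} > \frac12 M(k,g)$), the single condition $P_{k,g,s}(g^2/2) \le 0$ already yields $b_2 \ge M(k,g) - b_1 \ge M(k,g) - g^2/2$. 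This is how the paper dispatches the second bound in one line, for odd and even $g$ alike; the asymmetry of the test points about the vertex that you flag is harmless because the shift only moves the vertex to the right. Your proposed second sign condition $P_{k,g,(k-2)g}(M(k,g) - g^2/2) \le 0$ is therefore redundant --- and, a caution, it is not a free consequence of the first: its lower side requires $b_1 \le M(k,g) - g^2/2$, which is not automatic when $M(k,g) < g^2$. This does occur within your stated range, e.g.\ at $(k,g) = (3,12)$, where $M(3,12) = 126 < 144$; the condition still holds there since $b_1(3,12,12) \approx 24.3 < 54$, but you would have to verify it separately, so your route genuinely doubles the even-girth obligations rather than merely restating them. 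In summary: correct approach, essentially the paper's, modulo actually executing the stated inductions and an avoidable detour in the even case that the vertex-symmetry observation eliminates.
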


\begin{proof}
    Due to Lemma \ref{lemma:incr-decr-s-bounds}, it is sufficient to consider $s = (k - 2)g$. We show that $P_{k, g, s}(g^2/2) < 0$ which implies that $g^2/2$ is between $b_1(k, g, s)$ and $b_2(k, g, s)$.
    By the symmetry of a parabola, we obtain that $b_2(k, g, s) = 2v(k, g, s) - b_1(k, g, s) \ge M(k, g) - g^2/2$.
    Thus, it remains to show that $P_{k, g, s}(g^2/2) < 0$, which is rather technical.

    If $g = 2d + 1$, after plugging $n = g^2/2$ and $s = (k - 2)g$ in Inequality (\ref{eq:quadratic-odd}), we want to prove
    \begin{gather*}
        \frac{g^4}{4} - \frac{k(k - 1)^d - 2}{k - 2} \cdot \frac{g^2}{2} + g \cdot \frac{(kd - g)(k - 1)^d + 1}{k - 2} < 0, 
    \end{gather*}
    or equivalently,
    \begin{align}
        (k - 2)g^3 - (k(k- 1)^d - 2) \cdot (4d + 2) &+ 4(kd - g)(k - 1)^d + 4 < 0, \nonumber\\
        (k - 2)g^3 + 4g + 4 &< (4g + 2k)(k - 1)^{(g - 1)/2}.\label{eq:simple-odd}
    \end{align}

We prove (\ref{eq:simple-odd}) by splitting it into five cases, each of which requires just routine induction to prove.

\begin{itemize}
\item Case $g = 3$, $k \ge 7$: 
\begin{equation}
(k - 2)g^3 + 4g + 4 = 27k-38 < (2k+12)(k-1) = (4g + 2k)(k - 1)^{(g - 1)/2}\nonumber
\end{equation}
\item Case $g=5$, $k \ge 5$: 
\begin{equation}
(k - 2)g^3 + 4g + 4 = 125k-226 < (2k+20)(k-1)^2 = (4g + 2k)(k - 1)^{(g - 1)/2}\nonumber
\end{equation}
\item Case  $k = 3$, $g \ge 11$: 
\begin{equation}
(k - 2)g^3 + 4g + 4 < 1.25g^3 < 50 \cdot 2^{(g - 1)/2} \le (4g + 2k)(k - 1)^{(g - 1)/2}\nonumber
\end{equation}
\item Case  $k = 4$, $g \ge 7$: 
\begin{equation}
(k - 2)g^3 + 4g + 4 < 2.25g^3 < 36 \cdot 3^{(g - 1)/2} \le (4g + 2k)(k - 1)^{(g - 1)/2}\nonumber
\end{equation}
\item Case  $k \ge 5$, $g \ge 7$: 
\begin{equation}
(k - 2)g^3 + 4g + 4 < (k-1) g^3 <  (k-1) \cdot 4g \cdot 4^{(g - 3)/2} < (4g + 2k)(k - 1)^{(g - 1)/2}\nonumber
\end{equation}
\end{itemize}
   
    If $g = 2d$, according to Proposition \ref{prop:quadratic-even} we want to show that for $n = g^2/2 = 2d^2$ and $s = 2(k - 2)d$ it is true that
    \begin{equation*}
    kn^2 - kMn - sn + Ms + 2s\cdot\frac{(kd - 2d - k + 1)(k - 1)^{d}}{(k - 2)^2} + \frac{2s(k - 1)}{(k - 2)^2} < 0
    \end{equation*}
    After plugging and multiplying by $(k - 2) / (4d)$, we obtain
    \begin{gather*}
    k(k - 2)d^3 - kd(k - 1)^d + kd - (k-2)^2d^2 + (k - 2)(k - 1)^d - k + 2 + \\ + (kd - 2d - k + 1)(k - 1)^{d} + k - 1 < 0,
    \end{gather*}
which can be simplified to
    \begin{gather}
        k(k - 2)d^3 - (k - 2)^2d^2 + kd + 1  < (2d + 1)(k - 1)^d.\label{eq:simple-even}
    \end{gather}

We prove (\ref{eq:simple-even}) by splitting it into four cases, each of which requires just routine induction to prove.

\begin{itemize}
\item $d=2$, $k \ge 11$:
\begin{equation*}
k(k - 2)d^3 - (k - 2)^2d^2 + kd + 1 = 
4k^2+2k-15 < 5(k-1)^2=(2d + 1)(k - 1)^d.
\end{equation*}
\item $k \ge 5$, $d \ge 3$:
\begin{equation*}
k(k - 2)d^3 - (k - 2)^2d^2 + kd + 1  < (k - 1)^2d^3 < 
(k-1)^2(2d + 1)4^{d-2} \le (2d + 1)(k - 1)^d.
\end{equation*}
\item $k=3$, $d \ge 6$:
\begin{equation*}
3d^3 - d^2 + 3d + 1 < (2d + 1) \cdot 1.5 \cdot d^2< (2d + 1)2^d.
\end{equation*}
\item $k=4$, $d \ge 4$:
\begin{equation*}
8d^3 - 4d^2 + 4d + 1  < (2d + 1) \cdot 4 \cdot d^2 < (2d + 1)3^d.
\end{equation*}
\end{itemize}    
\end{proof}

Finally, we proceed to the main result of this section. We show that if $G$ is a~nontrivial $(k, g, s)$-multipole with $s \le (k - 2)g$, then it cannot happen that $|G| \le b_1(k, g, s)$. Thus, we obtain a nontrivial lower bound on the size of $G$.

\begin{theorem}\label{thm:kgs-pole-lower}
    Let $G$ be a nontrivial $(k, g, s)$-multipole for some $k \ge 3$, $g \ge 5$ and $s \le (k - 2)g$. Then
    $$|G| \ge b_2(k, g, s) \ge \frac12 M(k, g).$$
\end{theorem}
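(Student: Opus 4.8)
The plan is to lean entirely on the quadratic dichotomy already in hand. By Propositions \ref{prop:quadratic-odd} and \ref{prop:quadratic-even} the order $n=|G|$ satisfies $P_{k,g,s}(n)\ge 0$, so either $n\le b_1(k,g,s)$ or $n\ge b_2(k,g,s)$; and since the vertex of the parabola obeys $v(k,g,s)\ge\tfrac12 M(k,g)$, the inequality $b_2(k,g,s)\ge\tfrac12 M(k,g)$ is immediate. Hence the whole statement reduces to \emph{excluding the low branch}: I must show that a nontrivial multipole cannot satisfy $n\le b_1(k,g,s)$. By Lemma \ref{lemma:not-too-many-semiedges} I may assume that $G$ has no vertex of inner degree $1$, that is, its inner (semiedge-free) graph has minimum degree at least $2$.

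First I would dispose of the parameter ranges covered by Proposition \ref{prop:simple-bounds}, where $b_1(k,g,s)\le g^2/2$. There it suffices to prove that every nontrivial $(k,g,s)$-multipole with $s\le(k-2)g$ has more than $g^2/2$ vertices, since $n>g^2/2\ge b_1$ then forces $n\ge b_2$. The inner graph cannot be a single cycle: a cycle of length $g$ is trivial, while a cycle of length $\ell>g$ would carry $\ell(k-2)>(k-2)g$ semiedges. So $G$ is cyclic, has minimum inner degree $\ge 2$, and is not a single cycle; I fix a shortest cycle $C$ of length $\ell=\mathrm{girth}(G)\ge g$, around which genuine additional structure branches off.

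The heart of the argument is a Moore-type estimate centred not on a single vertex or edge but on the cycle $C$ itself -- an object available precisely because $G$ is cyclic, which is exactly what distinguishes nontrivial multipoles from the acyclic ones. With $d=\lfloor(g-1)/2\rfloor$, the girth forces every ball of radius $d$ to be a tree and, globally, the vertices near $C$ to grow as $\ell$ disjoint $(k-1)$-ary branchings; in the semiedge-free case this already gives roughly $n\ge \ell(k-1)^{d-1}\ge g(k-1)^{d-1}$, which comfortably exceeds $g^2/2$ throughout the range of Proposition \ref{prop:simple-bounds}. Each semiedge prunes a subtree from this count, but the total number of semiedges is capped by $s\le(k-2)g$; averaging the pruning over the $\ell\ge g$ vertices of $C$, in the same spirit as the proofs of Propositions \ref{prop:quadratic-odd} and \ref{prop:quadratic-even}, the corrections cannot drive the count down to $g^2/2$ unless $G$ degenerates to the trivial $g$-cycle, whose $(k-2)g$ semiedges all sit on $C$ and prune maximally. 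I expect this control of the semiedge pruning to be the main obstacle: the global budget $s\le(k-2)g$ must be distributed against the $\ell$ cycle vertices so that the surviving tree still beats $g^2/2$, and the estimate becomes genuinely tight for precisely the small pairs that Proposition \ref{prop:simple-bounds} excludes.

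Finally I would settle the finitely many pairs with $g\ge5$ not covered by Proposition \ref{prop:simple-bounds}, namely $(k,g)\in\{(3,5),(3,7),(3,9),(4,5)\}$. For these I would not route through $g^2/2$ but compare $n$ directly with the explicit root $b_2(k,g,s)$ recorded in Table \ref{tab:b2-bounds}: it is enough to verify, for each admissible $s\le(k-2)g$, that the computed smallest order $n(k,g,s)$ from Tables \ref{tab:g3-values} and \ref{tab:g4-values} (extended where needed) is at least $b_2(k,g,s)$, the monotonicity of Lemma \ref{lemma:incr-decr-s-bounds} organising the finitely many values of $s$. This excludes the low branch in every case and yields $|G|\ge b_2(k,g,s)\ge\tfrac12 M(k,g)$.
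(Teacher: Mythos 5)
Your framing is sound as far as it goes: the reduction via Propositions \ref{prop:quadratic-odd} and \ref{prop:quadratic-even} to ``exclude the low branch'', the observation that $b_2(k,g,s)\ge v(k,g,s)\ge\tfrac12 M(k,g)$, the normalisation by Lemma \ref{lemma:not-too-many-semiedges}, and the split into the range of Proposition \ref{prop:simple-bounds} plus finitely many exceptions all match the paper's strategy. But the heart of your argument --- the Moore-type count centred on a shortest cycle $C$ --- is left as a hope, and as sketched it is quantitatively insufficient, not merely unpolished. Take $k=3$, $g=11$, $s=11$: let ten of the eleven vertices of an $11$-cycle carry semiedges and let the eleventh send a link to a vertex $u$, with the one remaining semiedge somewhere in the structure hanging at $u$. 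Girth $11$ guarantees tree-like growth only up to radius $d=5$, so the girth-certified neighbourhood of $C$ contains at most $11+(1+2+4+8+16)=42$ vertices (fewer after pruning by the residual semiedge) --- well below $g^2/2=60.5$, let alone the true bound $b_2(3,11,11)\approx 75$. The missing $30$-odd vertices come from completing the degrees of the depth-$5$ boundary vertices, which no local ball count around $C$ can see; recovering them is exactly the global information that the paper extracts by averaging the single-source bound of Proposition \ref{prop:moore-bound-multipoles} over \emph{all} vertices (resp.\ links), i.e.\ Propositions \ref{prop:quadratic-odd} and \ref{prop:quadratic-even} themselves. Your proposed cycle-centred estimate cannot re-derive what those propositions already give, and your own phrase ``I expect this control of the semiedge pruning to be the main obstacle'' concedes that the decisive step is open. (Even your semiedge-free baseline $n\ge\ell(k-1)^{d-1}$ is not justified: branches rooted at cycle vertices at cycle-distance about $g/2$ may merge at depth about $g/4$, creating cycles of length exactly $g$, so the hanging trees overlap and the product bound overcounts.)

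The paper closes the low branch by an entirely different mechanism, for which your sketch has no substitute: a lexicographic induction on $(k,g,s,n)$. If some vertex has inner degree $2$, it is deleted (for $k\ge4$, giving a smaller nontrivial $(k,g,s-(k-4))$-multipole) or suppressed (for $k=3$, giving a nontrivial $(3,g-1,s-1)$-multipole, and one checks $b_2(3,g-1,g-1)>b_1(3,g,g)$); if every vertex has inner degree at least $3$, then $|G|\ge M(3,g)$, which beats $g^2/2$ for $g\ge9$, while the residual range $5\le g\le 8$, $k\ge7$ is handled by first proving (again by induction) that some vertex meets no semiedge and then applying Proposition \ref{prop:moore-bound-multipoles} at that vertex. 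Two further defects: your list of exceptional pairs is incomplete --- outside Proposition \ref{prop:simple-bounds} with $g\ge5$ lie $k=3$ with $g\in\{5,6,7,8,9,10\}$ and $k=4$ with $g\in\{5,6\}$, not only the four pairs you name --- and Tables \ref{tab:g3-values} and \ref{tab:g4-values} stop at $(3,8)$ and $(4,6)$, so for $(3,9)$ and $(3,10)$ your table-based verification requires new computations the paper never needs (it resolves these inside the induction, using the tables only for $k=3$, $g\le8$). Finally, your structural claim that ``genuine additional structure branches off $C$'' fails for disconnected multipoles, where the shortest cycle may lie in a bare-cycle component; this is repairable by a semiedge-budget argument (a $g$-cycle component exhausts all $(k-2)g$ semiedges, forcing the rest to be a $(k,g)$-graph of order at least $M(k,g)$), but it must be said.
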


\begin{proof}
    Note that $b_2(k, g, s) \ge \frac12 M(k, g)$.
    It is easy to check that the statement is true for $k=3$ and $g\le 8$ (see Table \ref{tab:g3-values}), for other values we can use Lemma~\ref{lemma:incr-decr-s-bounds} and it is sufficient to prove $|G| \ge b_2(k, g, (k - 2)g)$ which is equivalent to $|G| > b_1(k, g, (k - 2)g)$.    
    This is clearly true for $s = 0$. We will proceed by induction as follows. Let $G$ be a~nontrivial $(k, g, s)$-multipole of order $n$ with $0 < s \le (k - 2)g$ and assume that Theorem \ref{thm:kgs-pole-lower} holds for all nontrivial $(k', g', s')$-multipoles of order $n'$ with $s' \le (k' - 2)g'$ where $(k', g', s', n')$ is lexicographically smaller than $(k, g, s, n)$. 
    By Lemma \ref{lemma:not-too-many-semiedges} we also assume that $G$ contains no vertex with inner degree $1$.
    
    \medskip
    \noindent\textbf{Case 1: $G$ contains a vertex $v$ with inner degree $2$}

   If $k \ge 4$, then let $G'=G-v$. Clearly, $G'$ is a $(k, g, s-(k-4))$-multipole of order $n-1$. Graph $G'$ cannot be trivial, otherwise either $G$ would be trivial, or it would have girth less than $g$ ($\ge 5$). But then 
   $G'$ fulfils the theorem condition and as $(k, g, s-(k-4), n-1)$ is lexicographically smaller than $(k, g, s, n)$ we have $|G'| \ge b_2(k, g, s-(k-4))$.
   By Lemma~\ref{lemma:incr-decr-s-bounds}, $|G'| \ge b_2(k, g, (k - 2)g))$ and then $|G| \ge b_2(k, g, (k - 2)g)+1$. Thus, $k=3$.

   For $k=3$ we already resolved  the cases when $g \le 8$. Thus, we may assume $g \ge 9$.
   We remove the vertex $v$ from $G$ and join the two links formerly incident with $v$ to produce one new link. The constructed multipole $G'$ is clearly a nontrivial $(3, g - 1, s - 1)$-multipole with $s - 1 \le (k - 2)(g - 1)$. By the induction hypothesis and Lemma \ref{lemma:incr-decr-s-bounds} we have $|G| \ge b_2(3, g - 1, s - 1) \ge b_2(3, g - 1, g - 1)$. So it is sufficient to show that
   $$b_2(3, g - 1, g - 1) > b_1(3, g, g)$$
   We manually checked that this is true for all $g < 14$. For $g \ge 14$, by Proposition \ref{prop:simple-bounds} it is sufficient to show
   \begin{equation}\label{eq:two-moores}
    M(3, g - 1) - \frac{(g - 1)^2}{2} > \frac{g^2}{2}, \qquad \text{or equivalently,} \qquad M(3, g - 1) > g^2 - g + \frac12.
   \end{equation}
   For $g = 2d + 1$, Inequality (\ref{eq:two-moores}) is equivalent to 
   \begin{equation}\label{eq:case1-odd}
   M(k, 2d) = 2\cdot 2^d - 2 > 4d^2 + 2d + \frac12,
   \end{equation}
   which is true for $g \ge 7$.
  
   For $g = 2d + 2$, Inequality (\ref{eq:two-moores}) is equivalent to
   \begin{equation}
       M(3, 2d + 1) = 3\cdot 2^d - 2  > 4d^2 + 6d + \frac{11}{2},
   \end{equation}
   which holds for $d \ge 6$, thus $g \ge 14$.

   \pagebreak
   
   \noindent\textbf{Case 2: Each vertex of $G$ has inner degree at least $3$}

    In this case, we have that $|G| \ge M(3, g)$. Also note, that in this case, we have $k \ge 4$. If $g \ge 9$, we show that
    \begin{equation}
        \label{eq:upper-proof-2e}
        M(3, g) > \frac{g^2}{2}.
    \end{equation}
    That is for $g = 2d + 1$ equivalent to
    $$3\cdot2^d - 2 > 2d^2 + 2d + \frac12,$$
    which is true for each $d \ge 4$, that is for each $g \ge 9$. If $g = 2d$, Inequality (\ref{eq:upper-proof-2e}) is equivalent to
    $$2^{d+1} - 2 > 2d^2,$$
    which is true for each $d \ge 5$, that is $g \ge 10$. Since $k \ge 4$ and $g \ge 9 \ge 7$, we have by Inequality (\ref{eq:upper-proof-2e}) and Proposition \ref{prop:simple-bounds} that $|G| > g^2/2 > b_1(k, g, s)$, so $|G| \ge b_2(k, g, s)$.

    For $5 \le g \le 8$ and $3 \le k \le 6$, we manually checked that $M(3, g) > b_1(k, g, s)$.

    Now we assume that $5 \le g \le 8$ and $k \ge 7$. We claim that $G$ has at least one vertex incident with no semiedge. Suppose the contrary. Then each vertex of $G$ is incident with a~semiedge. By removing one semiedge from each vertex of $G$, we obtain a $(k - 1, g, s - |G|)$-multipole $G'$ that is clearly nontrivial and $s - |G| \le (k - 2)g - g \le (k - 3)g$. Thus, by induction hypothesis $|G| = |G'| \ge \frac{1}{2}M(k - 1, g)$. However, one can easily prove by induction that $\frac{1}{2}M(k - 1, g) > (k - 2)g$, so $s \ge |G| > (k - 2)g$, which is a contradiction.

    Let $v$ be a vertex of $G$ incident with no semiedge. If $g = 2d + 1$, then applying Proposition \ref{prop:moore-bound-multipoles} on $v$ we get
    $$|G| \ge \frac{k(k - 1)^d - 2 - (k - 2)\left((k - 1)^{d - 1} - 1\right)}{k - 2}.$$
    If $g = 2d + 2$, we apply Proposition \ref{prop:moore-bound-multipoles} on some edge $vu$ incident with $v$. We obtain the smallest lower bound if $u$ has $k - 3$ semiedges (distance $0$) and all the remaining at most $s - k + 3 = (k - 2)g - k + 3$ semiedges are at distance $1$ from $uv$. So
    $$|G| \ge \frac{2(k - 1)^{d + 1} + s - (k - 3)(k - 1)^d - ((k - 2)g - k + 3)(k - 1)^{d - 1}}{k - 2}.$$
    Thus for specific values of $g$, we have
    \begin{itemize}
        \item If $g = 5$, then $|G| \ge k^2 - 5k + 11$, which is greater than $g^2/2$ for each $k \ge 6$.
        \item If $g = 6$, then $|G| \ge k^2 - 4k + 11$, which is greater than $g^2/2$ for each $k \ge 6$.
        \item If $g = 7$, then $|G| \ge k^3 - 8k^2 + 15k + 1$, which is greater than $g^2/2$ for each $k \ge 7$.
        \item If $g = 8$, then $|G| \ge k^3 - 7k^2 + 13k + 3$, which is greater than $g^2/2$ for each $k \ge 6$.
    \end{itemize}

\end{proof}

\section{Results on the connectivity of cages}
\label{sec:results}

\begin{theorem}
    \label{thm:kg-graphs-cc}
    If $G$ is a $(k, g)$-graph, where $k \ge 3$ and $g \ge 5$, of order
    $$|G| < 2b_2(k, g, (k - 2)g)$$
    then $G$ is cyclically $(k - 2)g$-edge-connected and each cycle separating $(k - 2)g$-edge-cut separates $G$ in two components one of which is a cycle of length $g$.
\end{theorem}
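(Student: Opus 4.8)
The plan is to translate every cycle-separating edge-cut of $G$ into a pair of cyclic $(k,g,s)$-multipoles whose orders sum to $|G|$, and then play this against Theorem \ref{thm:kgs-pole-lower} and the monotonicity of $b_2$ from Lemma \ref{lemma:incr-decr-s-bounds}. Concretely, given a cycle-separating edge-cut $S$, pick one cyclic component $C$ of $G-S$ and put $A=C$, $B=V(G)\setminus C$. Since $S$ is cycle-separating there are at least two cyclic components, so $B$ contains a cycle as well. All edges leaving the component $C$ lie in $S$, so $[A,B]\subseteq S$; writing $s=|[A,B]|$, the two multipoles $G_A,G_B$ obtained by cutting these $s$ edges into semiedges are $k$-regular, have girth at least $g$, carry exactly $s$ semiedges, are cyclic, and satisfy $|G_A|+|G_B|=|G|$.

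For the connectivity statement I would show $|S|\ge(k-2)g$. Suppose not; then $s=|[A,B]|\le|S|<(k-2)g$, so $G_A$ and $G_B$ are nontrivial $(k,g,s)$-multipoles, since cyclicity alone guarantees nontriviality when $s<(k-2)g$. Theorem \ref{thm:kgs-pole-lower} then gives $|G_A|,|G_B|\ge b_2(k,g,s)$, whence $|G|\ge 2b_2(k,g,s)$. By Lemma \ref{lemma:incr-decr-s-bounds}, $s<(k-2)g$ forces $b_2(k,g,s)>b_2(k,g,(k-2)g)$, so $|G|>2b_2(k,g,(k-2)g)$, contradicting the hypothesis. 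Hence no cycle-separating cut has fewer than $(k-2)g$ edges, i.e. $G$ is cyclically $(k-2)g$-edge-connected. (In passing this shows $G$ is connected: a $k$-regular graph with $k\ge3$ has only cyclic components, and two such components would give an empty cycle-separating cut, which is now excluded.)

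For the structural statement, let $S$ be a cycle-separating cut with exactly $(k-2)g$ edges and let $C$ be any cyclic component of $G-S$. As before $[C,V(G)\setminus C]\subseteq S$ gives $|[C,V(G)\setminus C]|\le(k-2)g$, while the connectivity statement applied to the cut $[C,V(G)\setminus C]$ gives $|[C,V(G)\setminus C]|\ge(k-2)g$. Thus $[C,V(G)\setminus C]=S$, so \emph{every} cyclic component is incident with all $(k-2)g$ edges of $S$. Taking two distinct cyclic components $C_1,C_2$, each edge of $S$ is incident with both, hence runs between $C_1$ and $C_2$; since $G$ is connected, no further component can exist, and $G-S$ has exactly the two components $C_1,C_2$, each a cyclic $(k,g,(k-2)g)$-multipole with $|C_1|+|C_2|=|G|$.

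To finish, I would observe that if both $C_1$ and $C_2$ were nontrivial then Theorem \ref{thm:kgs-pole-lower} would yield $|G|=|C_1|+|C_2|\ge 2b_2(k,g,(k-2)g)>|G|$, a contradiction; so at least one of them is a trivial $(k,g,(k-2)g)$-multipole, which by definition is a $g$-cycle. Hence $S$ separates $G$ into two components one of which is a $g$-cycle, as required. The main obstacle is precisely the second paragraph of this argument: one must show that a cut of the \emph{extremal} size $(k-2)g$ cannot be spread over three or more components, and the clean way to do this is the tightness bookkeeping above — each cyclic component's boundary is forced to equal $S$, which pins all cut edges between the same two components — after which the two-multipole lower bound closes the argument at once.
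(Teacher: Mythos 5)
Your argument is, at its core, the same as the paper's: cut $G$ along the edge-cut into two cyclic $(k,g,s)$-multipoles, apply Theorem~\ref{thm:kgs-pole-lower} to both, and use Lemma~\ref{lemma:incr-decr-s-bounds} to compare $b_2(k,g,s)$ with $b_2(k,g,(k-2)g)$, forcing one side to be trivial and hence a $g$-cycle. In one respect you are more careful than the paper: where the paper simply asserts that a smallest cycle-separating cut leaves exactly two components, your tightness bookkeeping (every cyclic component $C$ satisfies $\partial C = S$, so all cut edges run between the same two components and no third component can exist) actually proves this, and it does so for an arbitrary cycle-separating $(k-2)g$-cut rather than only a minimum one.

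There is, however, one genuine gap. The step ``by Lemma~\ref{lemma:incr-decr-s-bounds}, $s<(k-2)g$ forces $b_2(k,g,s)>b_2(k,g,(k-2)g)$'' invokes the lemma with its parameter $s$ set to $(k-2)g$, and the lemma's hypothesis requires $b_1(k,g,(k-2)g)$ and $b_2(k,g,(k-2)g)$ to be \emph{genuine roots} of $P_{k,g,(k-2)g}$. For $(k,g)=(3,6)$ this fails: $P_{3,6,6}(n)=3n^2-48n+204$ has negative discriminant, so $b_1(3,6,6)=b_2(3,6,6)=v(3,6,6)=8$ only by the fallback convention (the parenthesised entry in Table~\ref{tab:b2-bounds}), and Lemma~\ref{lemma:incr-decr-s-bounds} gives you nothing there. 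This is precisely why the paper opens its proof by disposing of $(k,g)=(3,6)$ separately: since $2b_2(3,6,6)=16$, the hypothesis $|G|<16$ admits only the Heawood graph on $14$ vertices, for which the theorem is verified directly. Your proof as written is silent on this case. The hole is easily patched --- for $s\le 5$ the polynomials $P_{3,6,s}$ do have real roots and one checks, e.g., $b_2(3,6,5)=10>8$, so the needed inequality holds by hand, or you can simply copy the paper's Heawood-graph check --- but some such step must be added. For every other pair with $g\ge 5$ the roots do exist (Proposition~\ref{prop:simple-bounds} covers all but finitely many cases and Table~\ref{tab:b2-bounds} shows $(3,6)$ is the only parenthesised entry with $g\ge5$), and your final application of Theorem~\ref{thm:kgs-pole-lower} at $s=(k-2)g$ needs no root hypothesis, so the rest of your argument is sound.
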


\begin{proof}
    For $(k, g)=(3, 6)$ we have $2b_2(3,6,6)=16$, thus the preconditions of the theorem hold only for the Heawood graph on 14 vertices. It is easy to check that the theorem holds for the Heawood graph. Thus assume $(k,g) \ne (3,6)$.   

    Let $S$ be a smallest cycle-separating edge-cut $S$ of size  $s \le (k - 2)g$. Since $S$ is a smallest cut, $G - S$ contains exactly two components $G_1$ and $G_2$, both being $(k, g, s)$-multipoles. If both $G_1$ and $G_2$ were nontrivial, then by Theorem \ref{thm:kgs-pole-lower} and Lemma \ref{lemma:incr-decr-s-bounds} we would have $|G| = |G_1| + |G_2| \ge 2b_2(k,g, s) \ge 2b_2(k, g, (k - 2)g)$, which is a contradiction. Thus one of $G_1$ and $G_2$ is trivial. Since both are cyclic, the only possibility is that $s = (k - 2)g$ and the trivial $(k, g, s)$-multipole $G_i$ is a cycle of length $g$, which proves the theorem.
\end{proof}

Now we list several pairs $(k, g)$ satisfying Conjectures \ref{conj:cc} and \ref{conj:g-cut}. This follows from the fact, that there is known a $(k, g)$-graph (not necessarily a cage) of order smaller than $2b_2(k, g, (k - 2)g)$.

\begin{theorem}
\label{thm:small-cages}
Let $k$ and $g$ be integers satisfying one of the following conditions
\begin{itemize}
    \item[(i)] there is a $(k, g)$-Moore graph,
    \item[(ii)] $k = 3$ and $g \le 15$,
    \item[(iii)] $k \in \{4, 5\}$ and $g \le 12$,
    \item[(iv)] $k \in \{6, 9\}$ and $g = 7$,
\end{itemize}
Then each $(k, g)$-cage $G$ is cyclically $(k - 2)g$-edge-connected and each cycle-separating $(k - 2)g$-edge-cut in $G$ separates $G$ into two components one of which is a cycle.
\end{theorem}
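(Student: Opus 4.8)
The plan is to reduce the whole statement to Theorem~\ref{thm:kg-graphs-cc}. That theorem already delivers both conclusions for \emph{any} $(k,g)$-graph of order strictly below $2b_2(k,g,(k-2)g)$, and by Lemma~\ref{lemma:incr-decr-s-bounds} the value $s=(k-2)g$ is the least favourable choice of cut size, so nothing smaller can do harm. Since a $(k,g)$-cage is a $(k,g)$-graph of minimum order, we have $n(k,g)\le |H|$ for every $(k,g)$-graph $H$. Hence for each listed pair it suffices to produce one $(k,g)$-graph whose order is strictly less than $2b_2(k,g,(k-2)g)$: the cage then inherits this bound and Theorem~\ref{thm:kg-graphs-cc} applies. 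The relevant values of $b_2(k,g,(k-2)g)$ are the larger roots of $P_{k,g,(k-2)g}$ recorded in Table~\ref{tab:b2-bounds}.

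First I would settle case~(i). If a $(k,g)$-Moore graph exists then it is a cage and $n(k,g)=M(k,g)$, so the inequality to verify is $M(k,g)<2b_2(k,g,(k-2)g)$, that is $b_2(k,g,(k-2)g)>\tfrac12 M(k,g)$. For even $g$ this is free: the parabola has vertex $v(k,g,(k-2)g)=\tfrac12 M(k,g)+\tfrac{(k-2)g}{2k}$, which exceeds $\tfrac12 M(k,g)$ since $(k-2)g>0$, whence $b_2\ge v>\tfrac12 M(k,g)$. This disposes of the generalized-polygon Moore graphs of girth $6$, $8$ and $12$. For odd girth, Moore graphs with $g\ge 5$ occur only for $g=5$, where the vertex sits exactly at $\tfrac12 M(k,g)$, so I would instead verify $P_{k,5,5(k-2)}(\tfrac12 M(k,5))<0$ by a direct substitution; this is comfortably strict for the Hoffman--Singleton graph ($k=7$) and for the hypothetical $(57,5)$-Moore graph, where the two roots are in fact irrational, so $|G|=M(k,5)$ cannot even equal $2b_2$.

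The one genuinely delicate point -- and the main obstacle -- is the borderline case $(k,g)=(3,5)$, the Petersen graph, where $P_{3,5,5}(n)=(n-5)^2$ forces $b_1=b_2=5$ and $|G|=10=2b_2$, so the strict hypothesis of Theorem~\ref{thm:kg-graphs-cc} just fails. I would treat it by returning to the multipole argument directly: a cycle-separating cut of size $s\le 5$ splits the Petersen graph into two cyclic $(3,5,s)$-multipoles; for $s<5$ neither can be the trivial $5$-cycle (which needs $s=5$ semiedges), so both are nontrivial and Theorem~\ref{thm:kgs-pole-lower} together with Lemma~\ref{lemma:incr-decr-s-bounds} gives $|G|\ge 2b_2(3,5,s)>10$, a contradiction, whereas for $s=5$ each part has exactly $5$ vertices and the only cyclic cubic multipole of girth $\ge 5$ on $5$ vertices is the $5$-cycle with one semiedge at each vertex, which is trivial. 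Either way one side is a $5$-cycle, so the conclusion holds. The small girths $g\in\{3,4\}$ lie outside the range $g\ge 5$ of Theorem~\ref{thm:kg-graphs-cc}; I would verify them directly on the Moore graphs $K_{k+1}$ and $K_{k,k}$, as promised at the beginning of this section.

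Finally, for the non-Moore cases (ii)--(iv) the argument is a finite numerical check. For each pair I would take the smallest order of a $(k,g)$-graph recorded in the dynamic survey~\cite{Exoo} -- the exact cage orders where they are known (all $g\le 12$ for $k=3$, and the listed pairs for $k\in\{4,5,6,9\}$), and otherwise the best known upper bounds on $n(3,13)$, $n(3,14)$ and $n(3,15)$ -- and compare each with $2b_2(k,g,(k-2)g)$ from Table~\ref{tab:b2-bounds}. I expect this comparison to be tight in several places: notably the cruder estimate $2M(k,g)-g^2$ coming from Proposition~\ref{prop:simple-bounds} is already insufficient for $(3,13)$, so one must compare against the exact roots $b_2$ rather than the simplified bounds. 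In every listed case the recorded order stays strictly below $2b_2(k,g,(k-2)g)$, and Theorem~\ref{thm:kg-graphs-cc} then yields both Conjectures~\ref{conj:cc} and~\ref{conj:g-cut}.
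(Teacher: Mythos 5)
Your proposal is correct and follows the same overall route as the paper: reduce everything to Theorem~\ref{thm:kg-graphs-cc}, treat $g\in\{3,4\}$ directly on the complete and complete bipartite graphs, and for cases (ii)--(iv) compare the recorded upper bounds on $n(k,g)$ from~\cite{Exoo} against $2b_2(k,g,(k-2)g)$ (indeed against the exact roots, not the simplified bound $2M(k,g)-g^2$, which, as you observe, already fails for $(3,13)$ since $2M(3,13)-169=211<272$). Where you genuinely diverge is case (i), and here your treatment is actually \emph{more} careful than the paper's own proof, which dismisses this case as trivial. The inequality $n(k,g)<2b_2$ is indeed immediate for even-girth Moore graphs (the vertex of the parabola is shifted to $\tfrac12 M+\tfrac{s}{2k}>\tfrac12 M$) and verifiable by substitution for $k\in\{7,57\}$, $g=5$ (e.g.\ $P_{7,5,25}(25)=-300<0$); but for the Petersen graph one has $P_{3,5,5}(n)=(n-5)^2$, so $b_1=b_2=5$ and $n(3,5)=10=2b_2(3,5,5)$, and the strict hypothesis of Theorem~\ref{thm:kg-graphs-cc} fails. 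The paper's blanket claim that case (i) is trivial thus has a genuine (if easily repaired) borderline defect at $(3,5)$, and your patch is a valid repair: for a minimum cycle-separating cut of size $s<5$ both sides are nontrivial $(3,5,s)$-multipoles and $2b_2(3,5,s)=10+\sqrt{100-20s}>10$ gives a contradiction, while for $s=5$ each cyclic side must have exactly $5$ vertices and hence be a $5$-cycle with one semiedge per vertex. In short: same skeleton as the paper, plus a correct fix of an edge case the paper overlooks.
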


\begin{proof}
For $g \in \{3,4\}$ the cages are complete and complete bipartite graphs and it is easy to prove that these graphs fulfil the theorem. Otherwise,  
by Theorem \ref{thm:kg-graphs-cc}, it is sufficient to show that $n(k, g) < 2b_2(k, g, (k - 2)g)$. This is trivial for Case (i). For cases (ii) -- (iv) it follows from individual upper bounds on $n(k, g)$ taken from \cite{Exoo}. 
\end{proof}

According to Theorem \ref{thm:small-cages}, the cyclic edge-connectivity of a hypothetical $(57, 5)$-Moore graph is $55 \cdot 5 = 275$ and each cycle-separating $275$-edge-cut separates a $5$-cycle.

Our results can also be used to improve lower bounds on the size of a cyclic part that was provided by Nedela and Škoviera \cite{Nedela95}.
According to them, a \emph{cyclic part} is an induced subgraph of a cubic graph that can be separated by a cycle-separating edge-cut of the minimal cardinality. Clearly, a cyclic part of a cubic graph with cyclic edge-connectivity $c$ is a $(3, c, c)$-multipole. Therefore, by Theorem \ref{thm:kgs-pole-lower}, we have the following bound.

\begin{corollary}
Let $G$ be a cubic graph with cyclic edge-connectivity $c$ and let $P$ be a nontrivial cyclic part of $G$. Then
$$|P| \ge b_2(3, c, c) \ge \frac{1}{2} M(3, g).$$
\end{corollary}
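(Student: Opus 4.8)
The plan is to recognise a nontrivial cyclic part $P$ as a nontrivial $(3, c, c)$-multipole and then to invoke Theorem~\ref{thm:kgs-pole-lower} directly. Let $S$ be a cycle-separating edge-cut of minimum size $c$ whose removal exhibits $P$ as one of the two cyclic sides, and let $Q$ denote the other side. Each edge of $S$ has exactly one endpoint in $P$, so within $P$ it becomes a semiedge; since $G$ is cubic and $|S| = c$, the multipole $P$ is $3$-regular and carries exactly $s = c$ semiedges. Because $k = 3$ forces $(k-2)g = c = s$, the parameters land precisely on the boundary case $s = (k-2)g$ that Theorem~\ref{thm:kgs-pole-lower} covers.

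The only point that genuinely needs an argument is that the girth of $P$ is at least $c$, which is what permits setting $g = c$; I expect this to be the sole piece of real content, everything else being bookkeeping. Suppose to the contrary that $P$ contained a cycle $C$ of length $\ell < c$. In the cubic graph $G$, the set of edges with exactly one endpoint in $V(C)$ has size at most $\ell$, since each of the $\ell$ vertices of $C$ already uses two of its three incident edges inside $C$ and so contributes at most one edge leaving $V(C)$ (any chord only lowers this count, so the bound $\le \ell$ is safe). This set separates $V(C)$ from $V(G) \setminus V(C)$. As $C \subseteq P$ and $P, Q$ are vertex-disjoint, we have $Q \subseteq G - V(C)$, so the far side still contains the cycles of $Q$ while the near side contains $C$. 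Hence this boundary cut is cycle-separating of size at most $\ell < c$, contradicting the assumption that the cyclic edge-connectivity of $G$ equals $c$. Therefore $P$ has girth at least $c$ and is a legitimate $(3, c, c)$-multipole.

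It then remains only to confirm nontriviality in the multipole sense. By definition a cyclic part contains a cycle, so $P$ is cyclic; and since $P$ is a \emph{nontrivial} cyclic part in the sense of \cite{Nedela95}, it is not a single $c$-cycle with $c$ pendant semiedges. Thus $P$ is a nontrivial $(3, c, c)$-multipole, and (taking $c \ge 5$ as inherited from the hypothesis $g \ge 5$ of the theorem) Theorem~\ref{thm:kgs-pole-lower} applies with $(k, g, s) = (3, c, c)$ to give $|P| \ge b_2(3, c, c) \ge \tfrac12 M(3, c)$, which is the asserted bound. The one subtlety worth flagging is this implicit requirement $c \ge 5$ and the matching boundary identity $s = (k-2)g$; both are automatic here, so no separate case analysis should be needed.
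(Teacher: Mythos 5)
Your proposal is correct and follows essentially the same route as the paper, which simply observes that a cyclic part of a cubic graph with cyclic edge-connectivity $c$ is ``clearly'' a $(3, c, c)$-multipole and invokes Theorem~\ref{thm:kgs-pole-lower}; your boundary-of-$C$ argument just makes explicit the girth bound the paper leaves implicit. The only caveat is your remark that $c \ge 5$ is ``automatic'' --- cubic graphs with cyclic edge-connectivity $3$ or $4$ do exist, so this hypothesis is genuinely needed for the appeal to Theorem~\ref{thm:kgs-pole-lower} (though the corollary holds trivially for $c \le 4$, and the paper glosses over this point as well).
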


Nedela and Škoviera provided the bound $|P| \ge 2c - 4$, or $|P| \ge 2c - 3$ if $c \ne 6$. Although this bound is tight for $c \in \{5, 6\}$, it is only linear. We have provided an exponential bound, which is a significant improvement for larger values of $c$.

Concluding our paper, we briefly discuss when a $(k, g,s)$-multipole should be considered nontrivial if $s > (k - 2)g$.
For instance, Figure \ref{fig:17-poles} illustrates $(3, 11, 17)$-multipoles of orders $17$, $19$, $21$ and $23$.
At first glance, there is no reason to exclude all of them from the definition of $n(k, g, s)$ and excluding only the $17$-cycle does not seem reasonable.
However, Propositions \ref{prop:quadratic-odd} and \ref{prop:quadratic-even} show that, for sufficiently small $s$, there is a~clear natural gap that separates nontrivial $(k, g, s)$-multipoles from trivial ones, since there are no $(k, g, s)$-multipoles of order between $b_1(k, g, s)$ and $b_2(k, g, s)$. According to this, all $(3, 11, 17)$-multipoles from Figure \ref{fig:17-poles} are trivial, since $b_1(3, 11, 17) = 43$ and $b_2(3, 11, 17) = 51$, so there are not $(3, 11, 17)$-multipoles whose order is strictly between $43$ and $51$. Furthermore, we checked by a computer program that there is no $(3, 11, 17)$-multipole of order at least $25$ and at most $43$. This suggests that the bounds on the size of a $(k, g, s)$-multipole from Propositions \ref{prop:quadratic-odd} and \ref{prop:quadratic-even} are not optimal. Also, the smallest $(3, 11, 17)$-multipole of order larger than $23$ has at least $51$ vertices.

\begin{figure}[h]
\begin{subfigure}[b]{0.24\linewidth}
\includegraphics[]{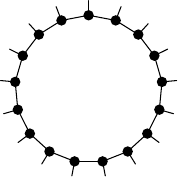}
\centering
\caption{order $17$}
\end{subfigure}
\begin{subfigure}[b]{0.24\linewidth}
\includegraphics[]{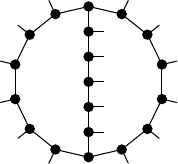}
\centering
\caption{order $19$}
\end{subfigure}
\begin{subfigure}[b]{0.24\linewidth}
\includegraphics[]{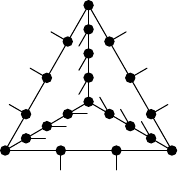}
\centering
\caption{order $21$}
\end{subfigure}
\begin{subfigure}[b]{0.24\linewidth}
\includegraphics[]{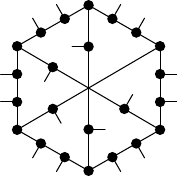}
\centering
\caption{order $23$}
\end{subfigure}
\caption{Examples of small cyclic $(3, 11, 17)$-multipoles}
\label{fig:17-poles}
\end{figure}

We finish the paper by providing a straightforward upper bound on $n(k, g, s)$.
Naturally, small nontrivial $(k, g, s)$-multipoles can be constructed from cages by removing some vertices or severing some edges, except for some cases of small $k$ and $g$, where such operations leave a~too small, and thus trivial, multipole.

\begin{proposition}\label{prop:mutlipoles-upper-bound}
Let $k \ge 3$, $g \ge 3$ and $0 < s < (k - 2)g$ be integers such that (i) $k$~is odd or $s$ is even, (ii) $k + g \ge 9$, and (iii) if $s$ is odd then $s \ge k - 2$. Then
$$n(k, g, s) \le n(k, g) - \left\lfloor \frac{s - 2}{k - 2} \right\rfloor + \left( k\left\lfloor \frac{s - 2}{k - 2} \right\rfloor + s \right) \bmod 2$$
and $n(k, g, (k - 2)g) \le n(k, g) - g$.
\end{proposition}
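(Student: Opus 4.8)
The plan is to realize a nontrivial $(k,g,s)$-multipole of the claimed order by starting from a $(k,g)$-cage $G_0$ on $n(k,g)$ vertices and applying a few local modifications, each of which preserves $k$-regularity and cannot decrease the girth (deleting and severing only destroy cycles, subdividing only lengthens them). I would use three operations: \textbf{(a)} \emph{severing} a link, turning it into two semiedges, which leaves the order fixed and raises the number of semiedges by $2$; \textbf{(b)} \emph{removing an induced path} $P$ on $j$ vertices, which deletes $j$ vertices and converts the $kj-2(j-1)=(k-2)j+2$ edges leaving $P$ into semiedges (I use here that for $j\le g-1$ every path on $j$ vertices is chordless, since a chord would close a cycle of length at most $j<g$); and \textbf{(c)} \emph{subdividing} a link by one new vertex and attaching $k-2$ semiedges to it, adding one vertex and $k-2$ semiedges. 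Writing $m=\lfloor (s-2)/(k-2)\rfloor$, $\epsilon=(km+s)\bmod 2$ and $m'=m-\epsilon$, the target order is in every case exactly $n(k,g)-m'=n(k,g)-m+\epsilon$, which matches the stated bound.

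The construction splits on the sign of $m'$. When $m'\ge 1$ I would remove an induced path on $m'$ vertices, creating $s_0=(k-2)m'+2$ semiedges, and then sever $\ell=(s-s_0)/2$ additional links. This is exactly where hypotheses (i) and (iii) are consumed: the remainder $r=(s-2)\bmod(k-2)$ lies in $\{0,\dots,k-3\}$, one checks $s-s_0=r+(k-2)\epsilon\ge 0$, and the definition of $\epsilon$ forces $s\equiv s_0\pmod 2$, so $\ell$ is a nonnegative integer and there are far more than $\ell$ links available to sever. When $m'=0$ (which forces $s$ even) I would instead sever $s/2$ links, keeping the order $n(k,g)$. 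The only remaining possibility is $m'=-1$, which under (i) and (iii) happens precisely when $s=k-2$ with $k$ odd; here operation (c) applied once gives exactly $k-2$ semiedges and order $n(k,g)+1$.

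It then remains to secure path existence and nontriviality. For existence, since $m'\le m\le g-1$, the two deepest branches of a shortest-path tree rooted at any vertex form an induced path on at least $g-1$ vertices (no chord can appear without violating the girth), from which I extract a sub-path on $m'$ vertices. For nontriviality I would avoid tracking where the cuts are placed and argue purely by counting: a multipole is cyclic exactly when its link-graph is not a forest, and any multipole of order $N$ with at least $N$ links contains a cycle. The final multipole has $N=n(k,g)-m'$ vertices and $s$ semiedges, hence $(kN-s)/2$ links, so it is cyclic as soon as $(k-2)N\ge s$. Since $m'\le (s-2)/(k-2)$ and $s<(k-2)g$, this reduces to a lower bound on $n(k,g)$ that is linear in $g$ (essentially $n(k,g)\ge 2g$), and condition (ii), $k+g\ge 9$, is calibrated exactly so that this holds; the finitely many pairs with $k+g<9$ are the genuine exceptions, where the operations leave too few vertices to retain a cycle. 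For the separate bound $n(k,g,(k-2)g)\le n(k,g)-g$ I would delete the $g$ vertices of a shortest cycle of $G_0$, turning its $(k-2)g$ outgoing edges into semiedges; the same count shows the remainder (of order $n(k,g)-g$) is cyclic and, having order exceeding $g$, is not a $g$-cycle, hence nontrivial.

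The main obstacle is not any individual construction but the bookkeeping that glues them together: verifying in each regime that $\ell$ is a true nonnegative integer and isolating $s=k-2$, $k$ odd, as the unique case forcing the $+1$ correction is precisely what uses (i) and (iii). The second delicate point is the nontriviality threshold: the clean criterion $(k-2)N\ge s$ makes the generic case immediate, but confirming that condition (ii) captures exactly the small pairs where it fails requires inspecting the smallest cages individually.
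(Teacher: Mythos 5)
Your proof is correct, and its skeleton is the same as the paper's: start from a $(k,g)$-cage, delete an induced path (or a girth cycle when $s=(k-2)g$), sever links to reach exactly $s$ semiedges with the parity bookkeeping driven by conditions (i) and (iii), and get nontriviality from condition (ii). But you diverge in two places, both to your advantage. First, the odd-correction case: the paper's Construction 3 removes a path on $i-1$ vertices and severs additional edges, asserting that ``$i \ge 1$ due to (iii)''. That assertion fails precisely when $s = k-2$ with $k$ odd (e.g.\ $k=5$, $s=3$ gives $i=0$, $j=1$), and the paper moreover restricts its argument to $s \ge 2$, so the case $s=1$, $k=3$, which the statement covers, receives no construction at all. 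Your operation (c) --- subdividing a link and attaching $k-2$ semiedges to the new vertex, the same move used in the proof of Lemma~\ref{lemma:not-too-many-semiedges} --- handles the entire $m'=-1$ regime, which you correctly isolate as $s=k-2$ with $k$ odd; so your argument actually repairs a small gap in the paper's proof. (Your severing count $\ell=(r+(k-2)\epsilon)/2$, i.e.\ $(j+k-2)/2$ in the odd case, is also the right one; the paper's $(k+2+j)/2$ is an arithmetic slip, off by two severed edges.) Second, nontriviality: the paper only notes that the order is at least $M(k,g)-g>g$ and declares $H$ nontrivial; your explicit criterion --- a multipole on $N$ vertices with at least $N$ links is cyclic, i.e.\ cyclicity follows from $(k-2)N \ge s$ --- supplies the justification the paper leaves implicit, and together with $N>g$ it also rules out the $g$-cycle when $s=(k-2)g$. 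One blemish, which is harmless: your claim that the two deepest branches of a shortest-path tree form an induced path on at least $g-1$ vertices is not quite right as stated (an edge between the branch tips may close a cycle of length exactly $g$), but it is also unneeded, since a girth cycle already contains a path on $g-1 \ge m'$ vertices and, as you yourself observe, every path on at most $g-1$ vertices is automatically induced in a graph of girth $g$.
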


\begin{proof}
Let $G$ be a $(k, g)$-cage. Let $i = \lfloor (s - 2)/(k - 2)\rfloor$ and $j = s - i(k - 2) - 2$. Observe that $j \equiv ki + s \pmod2$, so for $2 \le s < (k - 2)g$ we want to prove that $n(k, g, s) \le n(k, g) - i + j \bmod 2$. Since $s \le (k - 2)g$, we have $i \le g$. Note that if $k$ is even, then $s = i(k - 2) + 2 + j$ has to be even due to (i), so $j$ is also even. Conversely, if $j$ is odd, then $k$ is also odd. Starting with $G$, we construct a nontrivial $(k, g, s)$-multipole $H$ as follows.

\paragraph{Construction 1.} If $s = (k - 2)g$, we remove a cycle of length $g$ from $G$ obtaining a~$(k, g, s)$-multipole $H$ of order $n(k, g) - g$.

\paragraph{Construction 2.} If $j$ is even, we remove a path consisting of $i$ vertices from $G$ and sever $j/2$ edges in $G$ obtaining a $(k, g, s)$-multipole $H$ of order $n(k, g) - i$.

\paragraph{Construction 3.}  If $j$ is odd, then $k$ is also odd and $i \ge 1$ due to (iii). We remove a path consisting of $i - 1$ vertices and sever $(k + 2 + j)/2$ edges in $G$ which yields a~$(k, g, s)$-multipole $H$ of order $n(k, g) - i + 1$.

\bigskip

We have constructed a $(k, g, s)$-multipole $H$ of the desired order which is at least $n(k, g) - g \ge M(k, g) - g$ in all cases. It is a matter of a simple induction to check that $M(k, g) - g > g$ if $k + g \ge 9$, so that means that $H$ is a nontrivial $(k, g, s)$-multipole.
\end{proof}

This construction is not optimal in general. For instance, we have constructed a~nontrivial $(4, 5, 10)$-multipole of order $14$ by removing a $5$-cycle from the Robertson graph of order $19$, which is a $(4, 5)$-cage. However, the smallest nontrivial $(4, 5, 10)$-multipole has order $10$ -- the Petersen graph with one semiedge attached to each vertex.
On the other hand, we show that this construction yields smallest nontrivial $(k, g, s)$-multipoles for $g = 3$ and $g = 4$.

\begin{proposition}\label{prop:g34}
Let $k \ge 3$, $g \in \{3, 4\}$ and $s < (k - 2)g$ be integers such that $k$ is odd or $s$ is even, and $k + g \ge 9$. Then
$$n(k, g, s) = n(k, g) - \left\lfloor \frac{s - 2}{k - 2} \right\rfloor + \left( k\left\lfloor \frac{s - 2}{k - 2} \right\rfloor + s \right) \bmod 2$$
and $n(k, g, (k - 2)g) = n(k, g) - g$.
\end{proposition}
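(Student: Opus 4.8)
The upper bound is already supplied by Proposition~\ref{prop:mutlipoles-upper-bound}, whose constructions apply here (the extra hypothesis there that $s \ge k-2$ when $s$ is odd only excludes a handful of small degenerate cases, which I would dispose of directly by deleting a few edges from a complete, resp.\ complete bipartite, graph), so the plan is to prove only the matching lower bound: every nontrivial $(k,g,s)$-multipole $G$ has $|G|$ at least the asserted value. First I would record the two structural facts that drive everything. Writing $n = |G|$ and letting $L$ be the number of links, we have $s = kn - 2L$, so in particular $s \equiv kn \pmod 2$. Since the girth is at least $g \ge 3$ the link graph is simple, and for $g = 4$ it is also triangle-free; hence $L \le \binom{n}{2}$ when $g = 3$ and, by Mantel's theorem, $L \le \lfloor n^2/4\rfloor$ when $g = 4$. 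Setting $D(n) = kn - n(n-1)$ for $g = 3$ and $D(n) = kn - 2\lfloor n^2/4\rfloor$ for $g = 4$, this yields the necessary density inequality $s \ge D(n)$.

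The function $D$ is concave in $n$, vanishes at $n = n(k,g)$, and satisfies the convenient identity $D(g) = (k-2)g$; thus $s \ge D(n)$ holds exactly for $n$ at most a small root $\alpha$ or at least a large root $\beta$. Because $\alpha < g$ precisely when $s < (k-2)g$, the small branch contributes only orders $n \le g-1$ in the strict range, and only the single extra value $n = g$ when $s = (k-2)g$; the latter is realised solely by the $g$-cycle. As $G$ is cyclic of girth at least $g$ it has $n \ge g$ and, being nontrivial, is not the $g$-cycle, so $G$ necessarily lies on the large branch $n \ge \beta$.

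Next I would identify $\lceil \beta \rceil$ with the claimed base value $n(k,g) - i$, where $i = \lfloor (s-2)/(k-2)\rfloor$. The construction of Proposition~\ref{prop:mutlipoles-upper-bound} already realises a multipole of order $n(k,g) - i$ (when $j := s - i(k-2) - 2$ is even), which forces $D(n(k,g)-i) \le s$; the content is therefore the reverse inequality $D(n(k,g)-i-1) > s$, expressing that no smaller order is feasible. The key finiteness observation is that $s < (k-2)g$ forces $i \le g-1$, leaving only $i \in \{0,1,2\}$ for $g = 3$ and $i \in \{0,1,2,3\}$ for $g = 4$; in each case $D(n(k,g)-i-1) > s$ collapses to one linear inequality in $k$, valid throughout the admissible range of $s$. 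This verification is the main obstacle, and the subtle point is that the cap $s < (k-2)g$ is indispensable: for the largest odd value of $i$ the weaker bound $s \le i(k-2)+k-1$ coming merely from the definition of $i$ does not suffice, and only the genuine range restriction makes $D(n(k,g)-i-1)$ strictly exceed $s$. Together with the small-branch exclusion this gives $|G| \ge n(k,g) - i$.

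It remains to incorporate parity. A direct computation gives $j \equiv s + ki \pmod 2$, while $k\cdot n(k,g)$ is even, so $k(n(k,g)-i) \equiv ki \pmod 2$; hence the order $n(k,g)-i$ is consistent with $s \equiv kn \pmod 2$ exactly when $j$ is even. When $j$ is odd this order is excluded, forcing $|G| \ge n(k,g)-i+1$; moreover $j$ odd entails $k$ odd (if $k$ were even then $s$ would be even by hypothesis and $j \equiv s \equiv 0$), so passing to $n(k,g)-i+1$ flips the parity of $kn$ and restores feasibility, both for parity and, trivially, for density. In all cases $|G| \ge n(k,g) - i + (j \bmod 2)$, matching the upper bound and proving the first equality. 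Finally, for $s = (k-2)g$ the same density analysis, after discarding the $g$-cycle on the small branch, yields $|G| \ge n(k,g) - g$, attained by removing a $g$-cycle from a cage; this proves $n(k,g,(k-2)g) = n(k,g) - g$.
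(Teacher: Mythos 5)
Your proposal is correct and takes essentially the same route as the paper: the paper also pairs the upper bound of Proposition~\ref{prop:mutlipoles-upper-bound} with the quadratic inequality (\ref{eq:g3}) for $g=3$ (obtained there from Proposition~\ref{prop:quadratic-odd}, but identical to your simplicity bound $L \le \binom{n}{2}$) and Mantel's inequality (\ref{eq:g4}) for $g=4$, adding $1$ when $k\lfloor (s-2)/(k-2)\rfloor + s$ is odd. The only difference is one of detail: the paper dismisses the root-gap and parity analysis as ``straightforward,'' whereas you carry it out explicitly (the two branches of the concave density bound, exclusion of the small branch via nontriviality and $D(g)=(k-2)g$, and the case check $i \le g-1$), which fills in exactly what the paper omits.
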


\begin{proof}
    According to Proposition~\ref{prop:quadratic-odd}, the order $n$ of a $(k, 3, s)$-multipole satisfies
    \begin{equation}
    n^2 - (k + 1)n + s \ge 0. \label{eq:g3}
    \end{equation}
    If $G$ is a $(k, 4, s)$-multipole of order $G$, by Mantel's theorem $G$ has at most $n^2/4$ links. On the other, hand, the number of links of $G$ is $(kn - s)/2$, thus we have
     \begin{align}
         \frac{kn - s}{2} &\le \frac{n^2}{4},\nonumber\\
         n^2 - 2kn + 2s &\ge 0. \label{eq:g4} 
     \end{align}
    A straightforward analysis of Equations~(\ref{eq:g3})~and~(\ref{eq:g4}), with eventually adding $1$ when $k\lfloor (s - 2)/(k - 2) \rfloor + s$ is odd, yields that the upper bound from Proposition~\ref{prop:mutlipoles-upper-bound} is reached for $g \in \{3, 4\}$. 
\end{proof}

\section*{Acknowledgements}

The authors were partially supported by VEGA 1/0727/22 and APVV-23-0076.
Also, the authors would like to thank Gunnar Brinkmann for providing the computer program \texttt{multigraph} used for finding $n(k, g, s)$ for small parameters.

\bibliographystyle{abbrvnat}
\newcommand*{\doi}[1]{doi: \href{https://doi.org/#1}{#1}}
\bibliography{bibliography}

\end{document}